\documentclass[reqno, 12pt]{amsart}
\usepackage{epsfig,amsmath,amsfonts,latexsym}
\usepackage{amsthm}
\usepackage[abs]{overpic}
\usepackage[usenames,dvipsnames]{color}
\usepackage{palatino}
\usepackage[hyperfootnotes=false]{hyperref}
\usepackage{caption}
\usepackage{dsfont, url}
\usepackage{mathtools}
\hypersetup{
  colorlinks,
  citecolor=Purple,
  linkcolor=Black,
  urlcolor=Purple}
  
  
\theoremstyle{plain}

\newtheorem{theorem}{Theorem}

\newtheorem*{Conventions}{Conventions}
\newtheorem{dummy}{anything}[section]
\newtheorem*{thm}{Theorem}
\newtheorem{lemma}[dummy]{Lemma}

\theoremstyle{definition}
\newtheorem{definition}[dummy]{Definition}

\newtheorem{remark}[dummy]{Remark}

\theoremstyle{remark}
\textwidth 6in \textheight 7.6in \evensidemargin .25in
\oddsidemargin .25in 
\newcommand{\op}{\operatorname}

\newcommand{\R}{\mathbb{R}}


\def\R{\mathbb{R}}


\begin{document}

\title[Planarity in higher-dimensional contact manifolds]{Planarity in higher-dimensional contact manifolds}

\author{Bahar Acu}

\address[B.\ Acu]{Department of Mathematics \\ Northwestern University \\ Evanston \\ IL \\ USA}

\email{baharacu@northwestern.edu}
\email{baharacu@gmail.com}

\author{Agustin Moreno}

\address[A.\ Moreno]{Institut f\"{u}r Mathematik \\ Universit\"at Augsburg \\  Augsburg  \\ Germany}

\email{agustin.moreno2191@gmail.com}

\subjclass[2000]{Primary 57R17; Secondary 53C15, 32Q65}


\keywords{open book decompositions, symplectic fillings, Lefschetz fibrations, iterated planar, pseudoholomorphic curves.}

\begin{abstract} We obtain several results for (iterated) planar contact manifolds in higher dimensions: (1) Iterated planar contact manifolds are not weakly symplectically co-fillable. This generalizes a $3$-dimensional result of Etnyre \cite{Etn} to a higher-dimensional setting, where the notion of weak fillability is that due to Massot-Niederkr\"uger-Wendl \cite{MNW}. (2) They do not arise as nonseparating weak contact-type hypersurfaces in closed symplectic manifolds. This generalizes a result by Albers-Bramham-Wendl \cite{ABW}. (3) They satisfy the Weinstein conjecture, i.e. every contact form admits a closed Reeb orbit. This is proved by an alternative approach as that of \cite{Acu}, and is a higher-dimensional generalization of a result of Abbas-Cieliebak-Hofer \cite{ACH}. The results follow as applications from a suitable symplectic handle attachment, which bears some independent interest.
\end{abstract}

\maketitle

\section{Introduction}

Contact structures in every odd dimension can be understood via open book decompositions, a correspondence which has been established by the important work of Giroux \cite{Gir}. In dimension three, planar contact manifolds, those that correspond to an open book decomposition with genus zero pages, are in some sense the simplest contact 3-manifolds. For instance, their strong symplectic fillings, when they exist, carry Lefschetz fibrations inducing the given planar open book decomposition along their boundary \cite{Wen}. This implies that the classification of strong symplectic fillings of a planar contact manifold boils down to studying Dehn twist factorizations in the mapping class of a genus zero surface. \smallskip

While overtwisted contact structures are planar \cite{Etn}, there are known obstructions to planarity in dimension three. For instance, Etnyre \cite{Etn} proves that if $W$ is a weak symplectic filling of a contact 3-manifold $M$ supported by a planar open book decomposition, then the boundary of $W$ is connected. Moreover, planar contact 3-manifolds do not admit nonseparating embeddings as weak contact-type hypersurfaces inside closed symplectic 4-manifolds \cite{ABW, NW}. Etnyre's result can actually be recovered from this result (in the strong filling case), as it follows easily from the existence of symplectic caps in dimension three \cite{Eli,Etn2}. \smallskip

In higher dimensions, there are natural generalizations of planarity. For instance, one can replace a fibration defining an open book decomposition by a fibration whose fibers are still genus zero surfaces with boundary, but such that the base is now a higher-dimensional closed contact manifold. One could also consider contact manifolds admitting a submanifold with the structure of a  contact fibration over a Liouville domain of arbitrary dimension, whose fibers are closed planar contact $3$-manifolds. Both of these lead naturally to (planar versions of) the notion of \emph{spinal open book decomposition} or SOBD, as discussed in \cite{Mo} based on the three dimensional notion defined in \cite{SOBD}. In this paper, however, we will take an alternative point of view. \smallskip

An alternative generalization is to consider standard open book decompositions in higher dimensions, keeping $S^1$ as the base of the symplectic fibration, but impose the condition that the fibers carry a suitable structure which is inductively built from a low-dimensional planar structure. In this vein, the following notion was introduced in \cite{Acu}. \smallskip

Given a $(2n-2)$-dimensional Weinstein domain $(W^{2n-2}, \omega)$, we say that $W$ admits an \textit{iterated planar Lefschetz fibration} if there exists a sequence $f_i: W^{2i} \to \mathbb{D}^2$ of exact symplectic Lefschetz fibrations, $i=2,\dots,n-1$, where the regular fiber of $f_{i+1}$ is the total space of $f_{i}$, and $f_2: W^4 \to \mathbb{D}^2$ is a planar Lefschetz fibration. We denote its regular fiber by $W^2$, which is simply a genus zero surface with boundary. Observe that when $n=3$, an iterated planar Lefschetz fibration is a planar Lefschetz fibration. We point out that the notion of an iterated Lefschetz fibration (not necessarily planar) has also appeared in \cite{Seidel}, under the name of \emph{bifibration}.\smallskip

An \textit{iterated planar contact manifold} $(M, \xi)$ is a $(2n-1)$-dimensional contact manifold supported by an open book decomposition $\text{OB}(W^{2n-2}, \varphi)$ whose page $W^{2n-2}$ admits an iterated planar Lefschetz fibration. Note that when $n=3$, $M$ is a contact 5-manifold supported by an open book decomposition whose page admits a planar Lefschetz fibration, inducing a planar open book decomposition in the binding. We refer the reader to Section \ref{preliminaries} and also \cite{AEO} for a more detailed discussion on iterated planarity. \smallskip

The notion of weak symplectic fillability, as introduced in \cite{MNW} in arbitrary dimensions, will be relevant for the statement of the results in this paper. A contact manifold $(M^{2n-1},\xi)$ is \emph{weakly fillable} if it is the smooth boundary of a symplectic $2n$-manifold $(W^{2n},\omega)$, such that $(\omega+\tau d\lambda)\vert_{\xi}$ is symplectic for every $\tau\geq 0$, for one (and hence every) choice of contact form $\lambda$. We refer to the latter condition as \emph{weak domination}. One then says that $(M,\xi)$ is \textit{weakly dominated} by $\omega$, and that $(W,\omega)$ is a weak filling for $(M,\xi)$. On can similarly define the notion of a \emph{weak contact-type embedding}. Namely, if the contact manifold $(M,\xi)$ is given, one says that it embeds as a weak contact-type hypersurface in the symplectic manifold $(W,\omega)$, if there exists an embedding $M \hookrightarrow W$ satisfying the weak domination condition given above. See Section \ref{preliminaries} for further relevant background.

\begin{Conventions} Throughout the paper, we will abbreviate \textit{iterated planar} as \textbf{IP}, we will refer to an open book decomposition as in the above definition as an \textbf{IP} open book decomposition, and we will denote $M^{2n-1}=\textbf{OB}(W^{2n-2},\varphi)$. Lastly, when we talk about a weak symplectic co-filling, we will mean a symplectic manifold with disconnected weakly dominated contact boundary. Each contact manifold arising as a boundary component of a (weak) co-filling is said to be (weakly) co-fillable. We will usually keep track of dimensions in the notation, in order to avoid confusion (while perhaps making the notation a bit cumbersome in some places).
\end{Conventions}

\subsection*{Statements of the results} In \cite{MNW}, several examples of (exactly) symplectically co-fillable higher-dimensional contact manifolds were constructed. It is then natural to wonder which contact manifolds can fit into a symplectic co-filling. The first result of this paper, generalizing Etnyre's $3$-dimensional result \cite{Etn}, is an obstruction to iterated planarity. The statement reads as follows:

\begin{theorem}\label{mainthm} \textbf{IP} contact manifolds are not weakly symplectically co-fillable.
\end{theorem}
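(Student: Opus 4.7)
My plan is to argue by induction on $n$, where $M^{2n-1}$ is the IP contact manifold in question, using a dimension-lowering symplectic handle attachment to reduce the problem in each step to an IP contact manifold of dimension $2n-3$. The base case $n=2$ is precisely Etnyre's theorem \cite{Etn}: a planar contact $3$-manifold cannot bound a weak filling with disconnected boundary, which is exactly the statement that it is not weakly co-fillable.

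For the inductive step, suppose for contradiction that $(W^{2n},\omega)$ is a weak co-filling with $\partial W = M\sqcup M'$ and $M = \textbf{OB}(W^{2n-2},\varphi)$ IP. By definition, the page admits an iterated planar Lefschetz fibration $f_{n-1}:W^{2n-2}\to\mathbb{D}^2$ whose regular fiber $F^{2n-4}$ is itself IP. The plan is to construct, attached along $M$ and disjoint from $M'$, a symplectic cobordism that caps off the $\mathbb{D}^2$-base of $f_{n-1}$ inside the pages of the open book. The effect on the boundary is to replace $M$ with a new contact manifold of the form $\tilde M^{2n-3} = \textbf{OB}(F^{2n-4},\tilde\varphi)$, where $\tilde\varphi$ is built from (the restriction to $F$ of) $\varphi$ and the positive Dehn twists along the vanishing cycles of $f_{n-1}$. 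Since $F$ is IP with one fewer layer in its iterated planar tower, $\tilde M$ is IP of dimension $2n-3$. Gluing the cobordism to $W$ produces a new symplectic manifold $(\tilde W,\tilde\omega)$ with boundary $\tilde M\sqcup M'$. If weak domination is preserved by the construction, then $(\tilde W,\tilde\omega)$ is a weak co-filling of the $(2n-3)$-dimensional IP manifold $\tilde M$, contradicting the inductive hypothesis.

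The main obstacle is the construction and analysis of the relevant symplectic handle. One needs: (i) an explicit local model matching the contact germ of $M$ near the attaching locus, which is built from the product-type structure $\mathbb{D}^2\times F^{2n-4}$ coming from $f_{n-1}$; (ii) an identification of the attaching effect on the boundary with the open-book surgery that turns $\textbf{OB}(W^{2n-2},\varphi)$ into $\textbf{OB}(F^{2n-4},\tilde\varphi)$, which requires a careful bookkeeping of how $\varphi$ interacts with the vanishing cycles of $f_{n-1}$; and (iii), the most delicate point, a verification that weak domination in the higher-dimensional sense of \cite{MNW} survives the gluing, namely that the extended form $\tilde\omega$ satisfies $(\tilde\omega+\tau d\tilde\lambda)|_{\tilde\xi}>0$ for every $\tau\geq 0$ and some contact form $\tilde\lambda$ on $\tilde M$. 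Unlike the 3-dimensional setting, where weak fillability is a single positivity condition, higher-dimensional weak domination imposes a $1$-parameter family of conditions, so the handle must be constructed flexibly enough to accommodate all values of $\tau$ simultaneously. This handle --- the ``suitable symplectic handle attachment'' of the abstract --- is the technical heart of the argument, and I expect that its three stated applications (non weak co-fillability, the embedding obstruction, and the Weinstein conjecture) all follow from running essentially the same reduction to the base case in dimension $3$.
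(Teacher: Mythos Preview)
Your inductive scheme has a fatal dimensional mismatch. You propose attaching a symplectic cobordism to the $2n$-manifold $W$ along the boundary component $M^{2n-1}$, and you want the new boundary to be $\tilde M^{2n-3}\sqcup M'$. But any cobordism glued to $W^{2n}$ along a codimension-one boundary component yields another $2n$-manifold, whose boundary is necessarily $(2n-1)$-dimensional; in particular the untouched component $M'$ is still $(2n-1)$-dimensional, so $\tilde M$ cannot be $(2n-3)$-dimensional. There is no symplectic surgery that takes a weak co-filling of a $(2n-1)$-manifold and produces a weak co-filling of a $(2n-3)$-manifold in this way, so the inductive step as written cannot run. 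The binding $\textbf{OB}(F^{2n-4},\varphi_1)$ does sit inside $M$, but it is codimension two there, not a boundary component of anything obtained by handle attachment to $W$.

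The paper does use an inductive construction, but the induction builds an auxiliary object rather than reducing the theorem itself: one constructs a $2n$-dimensional symplectic cobordism $C^{2n}$ from $M^{2n-1}_1$ to a new $(2n-1)$-manifold $M^{2n-1}_2$ (same dimension), where $C^{2n}$ is assembled using the lower-dimensional cobordism $C^{2n-2}$ applied fiberwise to the pages. The point of this cobordism is not to reach dimension three, but to arrange that a neighborhood of the new boundary $M^{2n-1}_2$ contains a region foliated by embedded $J$-holomorphic $2$-spheres. One then glues $C^{2n}$ to the hypothetical co-filling $W$, extends the almost complex structure so that the other boundary components are strictly pseudoconvex, and studies the moduli space of spheres constrained to pass through a generic path $l$ joining $M^{2n-1}_2$ to another boundary component. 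After a polyfold perturbation this is a compact one-dimensional weighted branched orbifold; a local uniqueness lemma forces its boundary to consist of a single point (the unique sphere through $l(0)$), while strict pseudoconvexity at the far end forbids any boundary there. This contradicts the basic fact that such an object cannot have connected boundary. So the mechanism is a holomorphic-curve argument in the top dimension, in the spirit of \cite{MNW}, not a reduction to Etnyre's three-dimensional theorem.
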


In other words, if a contact manifold $M$ is weakly symplectically co-fillable, then it cannot be iterated planar. A further obstruction to iterated planarity, which generalizes a result by Albers-Bramham-Wendl \cite{ABW}, is the following: 

\begin{theorem}\label{nonsep} \textbf{IP} contact manifolds do not embed as nonseparating weak contact-type hypersurfaces in closed symplectic manifolds. 
\end{theorem}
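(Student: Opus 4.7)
The plan is to reduce Theorem~\ref{nonsep} directly to Theorem~\ref{mainthm} via a cut-and-paste argument: a nonseparating weak contact-type hypersurface produces, upon cutting open, a symplectic manifold with two copies of that hypersurface as weak contact-type boundary components, i.e.\ a weak co-filling. Assume for contradiction that the \textbf{IP} contact manifold $(M^{2n-1}, \xi)$ admits a nonseparating weak contact-type embedding $\iota \colon M \hookrightarrow (X^{2n}, \Omega)$ into a closed symplectic manifold.

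Since $\iota(M)$ is nonseparating, its complement in $X$ is connected, and one can choose a two-sided tubular neighborhood $U \cong \iota(M) \times (-\epsilon, \epsilon)$ on which the weak contact-type condition is preserved on each slice. Removing a slightly smaller open collar from $X$ yields a compact connected symplectic manifold $(W, \omega := \Omega\vert_W)$ whose boundary consists of two copies $M_+$ and $M_-$ of the underlying manifold, each equipped with the contact structure $\xi$. The central step is then to verify that $(W, \omega)$ is a weak symplectic co-filling in the sense of the Conventions above, i.e.\ that both boundary components are weakly dominated with the orientations induced from $W$. Granting this, the existence of $(W, \omega)$ contradicts Theorem~\ref{mainthm}, since the \textbf{IP} manifold $(M, \xi)$ now appears as a boundary component of a weak co-filling.

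The main obstacle lies precisely in establishing this weak co-filling structure. The two boundary components inherit opposite co-orientations from $X$: on $M_+$ the original co-orientation of $\iota(M)$ points outward, while on $M_-$ it points inward. To check the Massot--Niederkr\"uger--Wendl condition $(\omega + \tau d\lambda)\vert_\xi$ symplectic for $\tau \geq 0$ on $M_-$, with $\lambda$ chosen compatibly with the induced boundary orientation, one needs an effectively symmetric version of weak domination near the collar $U$. This should follow either directly from the standard neighborhood model of a two-sided weak contact-type hypersurface, or, if needed, by inserting a small symplectization-type collar on the concave side so as to arrange correctly oriented weak boundaries before invoking Theorem~\ref{mainthm}. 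Once the orientations are handled, no further input is required: the obstruction to iterated planarity comes entirely from Theorem~\ref{mainthm}, analogously to how Etnyre's fillability obstruction in dimension three is recovered from the nonseparating embedding obstruction via symplectic capping.
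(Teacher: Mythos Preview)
Your reduction has a genuine gap at precisely the point you flag as ``the main obstacle'': after cutting $X$ open along $M$, the resulting manifold $W$ is \emph{not} a weak co-filling. It is a weak symplectic \emph{cobordism}, with $M_+$ a positive (weakly dominated, convex-type) boundary and $M_-$ a \emph{negative} (concave-type) boundary. The weak domination condition $(\omega+\tau\, d\lambda)\vert_\xi$ symplectic for all $\tau\geq 0$ is not symmetric under reversal of the boundary co-orientation: on $M_-$, with the outward normal now opposite to the original contact co-orientation, one would need $(\omega-\tau\, d\lambda)\vert_\xi$ positive for all $\tau\geq 0$, which fails for large $\tau$ since $d\lambda\vert_\xi$ is itself positive. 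No ``symplectization-type collar'' can repair this, for the same reason one cannot turn a concave contact boundary into a convex one by attaching a piece of symplectization --- the Liouville (or stabilizing) direction points the wrong way throughout. So Theorem~\ref{mainthm} is simply not applicable to $W$.

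This is exactly why the paper, following Albers--Bramham--Wendl \cite{ABW}, does not attempt your reduction. Instead it disposes of the negative end $M_-$ by gluing infinitely many copies of the cobordism $W_1$ end-to-end (identifying each $M_-$ with the next $M_+$), producing a noncompact symplectic manifold $W_\infty$ with a single positive weak boundary $M_+\cong M$ and periodic geometry. The handle $(C^{2n},\omega_{C^{2n}})$ is then attached at $M_+$, and the constrained moduli space argument is rerun; noncompactness of the resulting $1$-dimensional branched orbifold is excluded by strict pseudoconvexity at the infinitely many internal copies of $M$ together with diameter bounds from bounded geometry. Note also that the logical dependence runs opposite to what you propose: the paper remarks that Theorem~\ref{mainthm} (in the strong case) can be recovered from Theorem~\ref{nonsep} via symplectic caps, not the other way around.
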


In other words, if an \textbf{IP} contact manifold admits a weak contact-type embedding into a closed symplectic manifold, then it separates the latter into two disjoint pieces.\smallskip

By the work of Lisca and Mati\'{c} \cite{LM}, we know that Stein fillings of contact manifolds can be embedded into closed symplectic manifolds. That is, Stein fillable contact manifolds admit symplectic caps. For general higher-dimensional contact manifolds, it was not known until very recently that they admit (strong) symplectic caps \cite{CE, Laz}. This fact can be used to recover Theorem \ref{mainthm} (with the word ``weakly'' replaced by ``strongly'') from Theorem \ref{nonsep}, but our proof is independent of the existence of caps. \smallskip

Recall that given a contact form $\lambda$ for a contact manifold $(M, \xi)$, there exists a unique contact vector field called the \textit{Reeb vector field} $R_{\lambda}$, defined by the equations $\iota_{R_{\lambda}}d\lambda=0$ and $\lambda(R_{\lambda})=1$. The Weinstein conjecture \cite{Wei} states that, on a compact contact manifold $(M,\xi)$ of any odd dimension, any Reeb vector field $R_{\lambda}$ for $\xi$ carries at least one closed periodic orbit. This is a central conjecture in contact and symplectic topology which has been inspirational for many developments in these fields and has a rich history. In dimension three, it was established by Taubes \cite{Tau} (based on Seiberg-Witten theory), which culminated a large body of work by several people extending over more than two decades. In higher dimensions, though there are several partial results \cite{AH,FHV,HV,Vit}, it is still open. \smallskip

Prior to Taubes's work, Abbas-Cieliebak-Hofer \cite{ACH} developed a program for proving the Weinstein conjecture based on open book decompositions, and proved the conjecture for planar contact $3$-manifolds. In this paper, we prove the Weinstein conjecture for the case of \textbf{IP} contact manifolds, by an alternative approach as that of \cite{Acu}, which is a generalization of the result by Abbas-Cieliebak-Hofer to higher dimensions. The proof of the following is a suitable adaptation of the proof in \cite{AH} for overtwisted contact manifolds: 

\begin{theorem}\label{WeinsteinIP} \textbf{IP} contact manifolds satisfy the Weinstein conjecture. 
\end{theorem}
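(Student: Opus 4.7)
The plan is to argue by contradiction: assume some contact form $\lambda$ for the \textbf{IP} contact structure $\xi$ on $M^{2n-1}=\textbf{OB}(W^{2n-2},\varphi)$ admits no closed Reeb orbit, and adapt the Bishop-family holomorphic disk method of Albers--Hofer \cite{AH} (originally deployed for contact manifolds containing a plastikstufe). The role played in \cite{AH} by the plastikstufe will be replaced here by the symplectic handle attachment that is the central technical result of the paper: applied to $M$, it should produce a (completed) symplectic manifold $(X,\Omega)$ with negative cylindrical end modeled on $\big((-\infty,0]\times M,\,d(e^t\lambda)\big)$ and a distinguished compact region $K\subset X$ whose geometry, inherited from the iterated planar tower on $W^{2n-2}$, carries a seed for a family of $J$-holomorphic disks with totally real boundary on a core submanifold.

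Next, I would choose a compatible almost complex structure $J$ on $X$ which is $\lambda$-adapted and cylindrical on the negative end, is tamed by $\Omega$, and which, inside $K$, makes the core totally real at the bottom (genuinely planar) stratum of the \textbf{IP} tower, so that a local one-parameter family of small $J$-holomorphic disks with boundary on the core exists by classical Bishop arguments. Let $\mathcal{D}$ denote the connected component of the moduli space of finite-energy disks in $X$ containing this initial family. Using the iterated Lefschetz structure on the page, I would propagate automatic transversality from the planar bottom upward through the tower, so that $\mathcal{D}$ is a smooth manifold of the expected dimension near its initial disks, with well-defined evaluation map $\op{ev}\colon\mathcal{D}\to X$.

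Now run the Symplectic Field Theory compactness theorem on sequences in $\mathcal{D}$: a Gromov limit is a holomorphic building whose lowest-level component is either a disk in $X$, a bubbled closed sphere or disk, or a finite-energy curve asymptotic to at least one closed Reeb orbit of $\lambda$ at the negative end. Under the standing no-orbits assumption the third alternative is forbidden, while sphere and disk bubbling is ruled out by index and energy bounds, using that the relevant curves have genus zero components fibered over a disk as dictated by the \textbf{IP} data. Hence $\mathcal{D}$ is compact, and $\op{ev}(\mathcal{D})$ gives a nonsingular holomorphic foliation of a bordered region of $X$ by disks with boundary on the core. A degree/homology computation, as in the closing step of \cite{AH}, rules out such a foliation topologically, providing the desired contradiction and thus the existence of a closed Reeb orbit for $\lambda$.

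The main obstacle I anticipate is the compactness-and-transversality analysis in higher dimensions. In the three-dimensional Abbas--Cieliebak--Hofer and Albers--Hofer arguments, automatic transversality and small-area bounds on Bishop disks essentially carry the day; in the \textbf{IP} setting one must instead work with holomorphic curves fibered over the iterated Lefschetz tower, and inductively propagate both transversality and energy control from the genus zero planar base fiber upward. This is precisely where the handle attachment developed earlier in the paper is decisive: it is designed so that the Bishop family remains confined to the planar stratum, forcing any failure of compactness to register as a closed Reeb orbit on $M$ at the negative end.
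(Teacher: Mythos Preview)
Your overall architecture---build a symplectic manifold with a negative cylindrical end modeled on $(M,\lambda)$, seed a moduli space of holomorphic curves near the positive boundary, and use SFT compactness to force breaking along closed Reeb orbits---matches the paper. But the implementation you describe diverges from what the handle attachment actually produces, and this matters.

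The handle does not create a plastikstufe-like object with a totally real core; there are no Bishop disks and no boundary conditions. Instead, attaching $(C^{2n},\omega_{C^{2n}})$ to an exact cobordism from $\alpha_0$ to the Giroux form $\alpha_1$ yields a symplectic manifold $W$ whose positive boundary $M'$ is \emph{stable Hamiltonian}, and near $M'$ there is a region $Y\cong S^1\times S^2\times \mathbb{D}_1^2\times\cdots\times\mathbb{D}_{n-2}^2$ foliated by \emph{closed} $J$-holomorphic spheres $u_{(s,\theta,z)}=\{s\}\times\{\theta\}\times S^2\times\{z\}$. These spheres are the seed for the moduli space $\mathcal{M}$, and the Local Uniqueness Lemma (Lemma~\ref{locuniq}) says that any sphere in $\mathcal{M}$ touching $(-\epsilon,0]\times Y$ is one of these. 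So your ``core submanifold with totally real boundary'' and the disk moduli $\mathcal{D}$ should be replaced throughout by this sphere moduli.

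Transversality is also handled differently from what you sketch. Near the explicit spheres $u_{(s,\theta,z)}$ the normal linearized operator splits as a sum of standard $\overline{\partial}$-operators on trivial line bundles, so regularity there is elementary---no inductive ``propagation through the tower'' is needed or used. Away from this region, multiply covered components can appear in the Gromov compactification, and the paper does \emph{not} attempt automatic transversality: it invokes the polyfold regularization of \cite{HWZ,HWZIII,Ben}, introducing an abstract multivalued perturbation that vanishes near the explicit spheres, so that the constrained moduli $\mathcal{M}'_{\bullet,l}=ev^{-1}(l)$ (for a properly embedded path $l$ from $\{0\}\times Y$ down the negative end) becomes a $1$-dimensional oriented weighted branched orbifold.

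Finally, the contradiction is not obtained from a foliation argument. One shows directly that $\mathcal{M}'_{\bullet,l}$ has a single boundary point (by Local Uniqueness) and is noncompact only via spheres escaping down the negative cylindrical end (exactness of $\omega$ on $(-\infty,0]\times M$ forbids closed spheres there). SFT compactness then forces breaking at the negative end, producing a finite-energy curve in the symplectization of $(M,\alpha_0)$ and hence a closed Reeb orbit. You may phrase this by contradiction if you like, but the ``degree/homology foliation'' step is absent.
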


\begin{remark}
In fact, \textbf{IP} contact manifolds satisfy a strong version of the Weinstein conjecture, similarly as in \cite{ACH}. Namely, there exist finitely many Reeb orbits whose homology classes sum to zero in the first homology group. This follows directly from the proof. 
\end{remark}

\subsection{Sketch of proofs} The technical input for obtaining the results is a suitable symplectic handle attachment, which is inspired by the handle attachments in \cite{Eli} and \cite{SOBD}. Similar ideas have also been discussed by the first author and collaborators for \cite{AEO}; see also \cite{DGZ14} for a similar construction. We fix an \textbf{IP} contact manifold $M_1^{2n-1}=\textbf{OB}(W_1^{2n-2}, \varphi)$ where $W_1^{2n-2}$ admits an \textbf{IP} Lefschetz fibration, yielding an \textbf{IP} open book decomposition in $M_1^{2n-1}$. The manifold $W_1^{2n-2}$ carries a natural homotopy class of Weinstein structures, which gives a supported contact structure in $M_1^{2n-1}$ via a construction due to Giroux. We will construct a symplectic cobordism $(C^{2n},\omega_{C^{2n}})$ from $M_1^{2n-1}$ (a strongly concave boundary component), to a new manifold $M_2^{2n-1}$, which is a stable boundary component of $C^{2n}$, and so carries a stable Hamiltonian structure induced from the symplectic structure $\omega_{C^{2n}}$ in $C^{2n}$. We shall do this by induction in the dimension, constructing a ``handle'' at each step, using as input the handle constructed in the previous step of the induction. The base case corresponds to the first step of Eliashberg's capping construction \cite{Eli} (See also the \emph{spine removal surgery} in \cite{SOBD}, and cf.\ \cite{Etn2}).\smallskip

Topologically, the effect of this handle attachment is to replace the given \textbf{IP} open book decomposition in $M_1^{2n-1}$ by a \emph{topological} open book decomposition in $M_2^{2n-1}$. Its pages $W^{2n-2}_2=W^{2n-2}_1 \cup C^{2n-2}$ have been enlarged by a symplectic cobordism $C^{2n-2}$ (constructed in the previous step, where we replace $M_1^{2n-1}$ by $M_1^{2n-3}=\partial W_1^{2n-2}$), the monodromy $\varphi$ extends to $W_2^{2n-2}$ by the identity along $C^{2n-2}$, and the new binding $M_2^{2n-3}=\partial W_2^{2n-2}$ has an abundance of embedded $2$-spheres. Here, we make a distinction between a topological open book, and a contact open book, since the pages of the former type of open book are no longer Liouville, and hence the manifold $M_2^{2n-1}$ does not carry any obvious supported contact structure. We shall denote this by $M_2^{2n-1}=\textbf{TOB}(W_2^{2n-2},\varphi)$. For a suitable almost complex data on $C^{2n}$ compatible with the stable Hamiltonian structure at $M_2^{2n-1}$, and which makes $M_2^{2n-1}$ a weakly pseudoconvex boundary component, these spheres in the new binding $M_2^{2n-3}$ become holomorphic.\smallskip

After adding a marked point, the virtual dimension of the resulting moduli space is $2n$, and we obtain an evaluation map. One can show that a local uniqueness lemma holds (Lemma \ref{locuniq} below), implying that the only spheres in the moduli space that touch a suitable subregion of $M_2^{2n-1}$ are the ones that we constructed ``by hand''. Moreover, the latter can be shown to be Fredholm regular, so that the moduli space is a manifold of dimension $2n$ near them.\smallskip

The cobordism $C^{2n}$ can be further modified, so that it may be glued to any symplectic manifold having $M_1^{2n-1}$ as a weak boundary component. Whereas the stable Hamiltonian structure along $M_2^{2n-1}$ is slightly perturbed, the modification may be chosen so that it does not affect the holomorphic spheres and the Local Uniqueness Lemma. The regions along which the original stable Hamiltonian structure is actually contact, become weakly dominated after the perturbation, whereas the region where the spheres are defined, and the Local Uniqueness Lemma holds, remains stable. \smallskip

The unifying idea for all the results is inspired by the proof of Theorem 6.1 in \cite{MNW}. Given a symplectic manifold $W^{2n}$ having $M_1^{2n-1}$ as a --weak or strong-- contact-type boundary component (either a hypothetical weak symplectic co-filling, or one we construct), we attach the cobordism $(C^{2n},\omega_{C^{2n}})$ to $W^{2n}$. We thus obtain a new symplectic manifold which we still call $W^{2n}$, having stable boundary $M^{2n-1}_2$. We then extend the moduli space of spheres to $W^{2n}$ together with the evaluation map. After choosing a generic and properly embedded path in $W^{2n}$ starting from the region where local uniqueness holds, we study the sub-moduli space of spheres which are constrained to intersect this path via the evaluation map, which has virtual dimension equal to $1$. While, after taking the Gromov compactification, there could be multiply covered spheres in this moduli preventing transversality by standard methods, this is dealt with via the polyfold machinery of Hofer-Wysocki-Zehnder \cite{HWZ} (in the Gromov-Witten case). After introducing an abstract and generic multivalued perturbation to the Cauchy-Riemann equation, and appealing to the polyfold regularization of constrained moduli spaces as in \cite{Ben}, we obtain a 1-dimensional, compact, oriented, weighted branched orbifold with non-empty boundary. Moreover, we may choose the abstract perturbation away from the subregion where the spheres are known to be regular and local uniqueness holds. In the case where $W^{2n}$ is compact (e.g.\ a co-filling), this moduli space is also compact. In the case where $W^{2n}$ has a noncompact end, this moduli space fails to be compact, but its noncompactness is tractable, in the sense that it corresponds to spheres escaping down the noncompact end of $W^{2n}$.\smallskip

\begin{figure}[t]\centering
\includegraphics[width=0.5\linewidth]{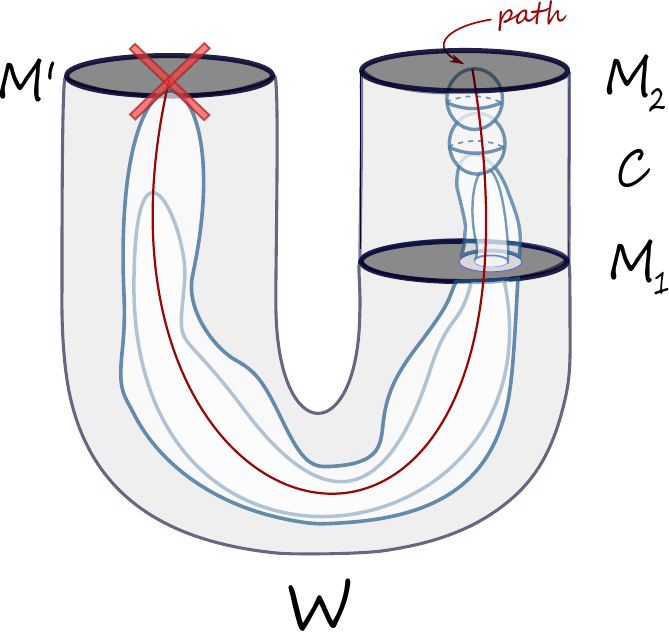}
\caption{\label{contradiction} In the proof of Theorem \ref{mainthm}, the curves of the constrained moduli space start at the boundary component of the hypothetical co-filling $W$ corresponding to the iterated planar contact manifold $M_1$. But then they hit an end corresponding to a different boundary component, which contradicts with strict pseudoconvexity.}
\end{figure}

For Theorem \ref{mainthm}, $W^{2n}$ is a hypothetical weak symplectic co-filling. The boundary of the resulting $1$-dimensional compact, oriented, weighted branched orbifold $\overline{\mathcal{M}}_{\bullet,l}$ corresponds to spheres in the constrained and perturbed moduli space which touch a boundary component of $W^{2n}$. The local uniqueness statement then yields that there is a unique curve in the boundary of this moduli space which touches the boundary component $M_2^{2n-1}$ (along the \emph{weakly} pseudoconvex subregion), so that the curves in the other boundary components of $\overline{\mathcal{M}}_{\bullet,l}$ necessarily touch other boundary components of $W^{2n}$ tangentially. But this is a contradiction, since we can take the almost complex structure so that the latter is \emph{strictly} pseudoconvex. The situation is depicted in Figure \ref{contradiction}. Observe that, while the spheres are not necessarily holomorphic (they are solutions to a perturbed Cauchy-Riemann equation), we can still appeal to pseudoconvexity for sufficiently small abstract perturbation to conclude that no perturbed spheres touch $M^\prime$ tangentially, as follows easily from SFT-compactness.\smallskip

\begin{figure}[t]\centering
\includegraphics[width=0.75\linewidth]{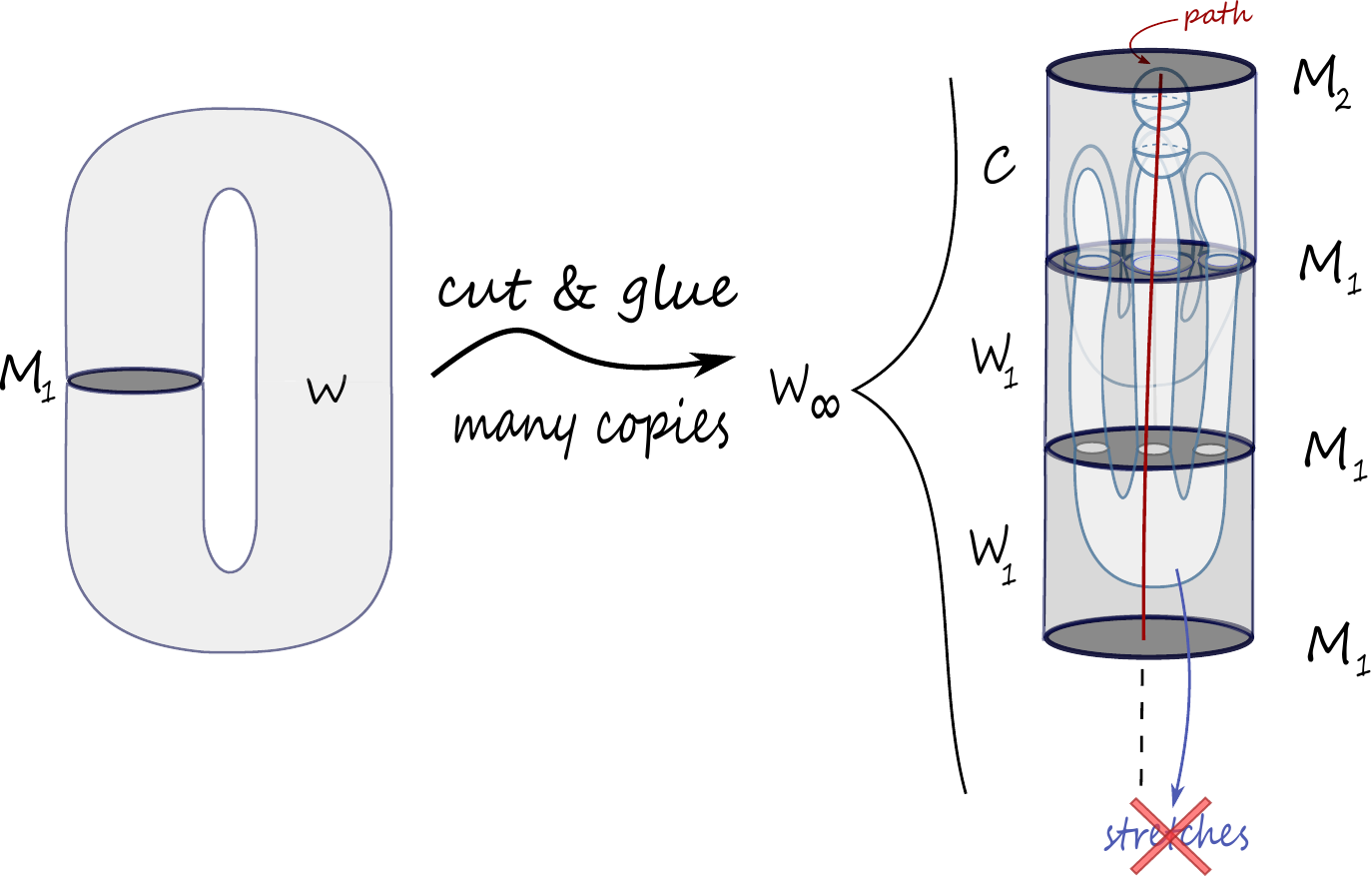}
\caption{\label{emb} In the proof of Theorem \ref{nonsep}, the curves of the constrained moduli space start at the boundary component stretch all the way down along the negative end, resulting in a contradiction.}
\end{figure}

For Theorem \ref{nonsep}, we adapt the proof in \cite{ABW}. Given a hypothetical weak contact-type embedding of $M_1^{2n-1}$ into a closed symplectic manifold $W$, we cut the latter along the former. The result is a weak symplectic cobordism $W_1^{2n}$ having $M_1^{2n-1}$ both as positive and negative weakly dominated boundary components. As in \cite{ABW}, we glue infinitely many copies of $W_1^{2n}$ to itself at the negative end, by inductively identifying positive end with negative end. The result is a noncompact symplectic manifold $W_\infty^{2n}$ with weak boundary $M_1^{2n-1}$, to which we may apply the symplectic handle attachment construction described above. The resulting (perturbed) moduli space of spheres is a $1$-dimensional non-compact, oriented, weighted branched orbifold $\overline{\mathcal{M}}_{\bullet,l}$, with one boundary component. By construction, $W_\infty^{2n}$ is periodic, and has infinitely many copies of $M_1^{2n-1}$ sitting as weak contact-type hypersurfaces which can be made strictly pseudoconvex. Periodicity implies that the geometry of $W_\infty^{2n}$ is bounded, and so closed holomorphic curves with bounded energy have bounded diameter. But then the spheres in $\overline{\mathcal{M}}_{\bullet,l}$ cannot shoot all the way down the noncompact end, since otherwise this would contradict pseudoconvexity. They also cannot stretch infinitely far down while staying at a bounded distance from the boundary of $W_\infty^{2n}$, because of the diameter bounds. This is a contradiction. See Figure \ref{emb}. \smallskip

\begin{figure}[t]\centering
\includegraphics[width=0.5\linewidth]{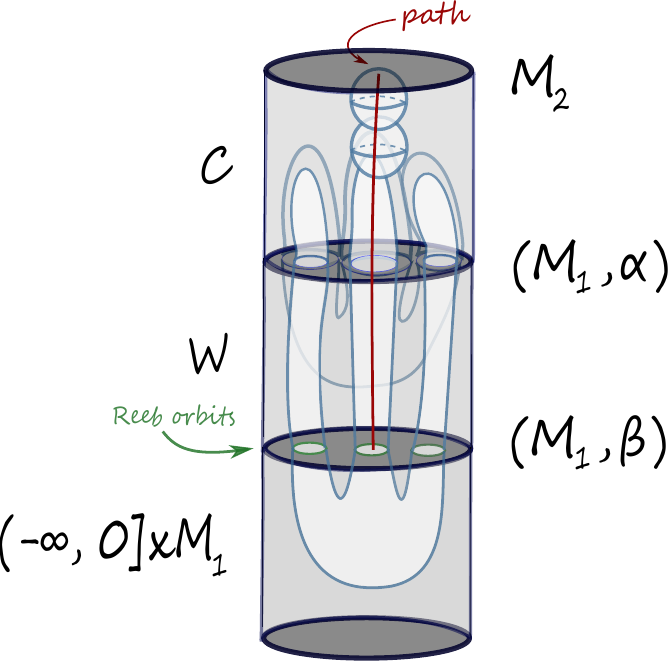}
\caption{\label{Weinsteinconj} In the proof of Theorem \ref{WeinsteinIP}, the curves of the constrained moduli space break at the negative ends, resulting in at least one closed Reeb orbit. Here, $\alpha$ is adapted to the open book, whereas $\beta$ is an arbitrary contact form inducing the given contact structure.}
\end{figure}

The arguments for Theorem \ref{WeinsteinIP} are a reformulation of those in \cite{AH}. We construct an exact symplectic cobordism from an arbitrary contact form to a specific Giroux form $\alpha$ supported by the \textbf{IP} open book decomposition (see Definition \ref{def:Girouxform}). We attach the symplectic handle, and study the resulting non-compact, oriented, weighted branched orbifold of spheres. Then, SFT-compactness \cite{SFT} and the exactness away from the handle implies breaking at the negative end, resulting in the desired closed Reeb orbit. See Figure \ref{Weinsteinconj}.

\subsection*{Acknowledgments} The authors thank Chris Wendl for suggesting these results together with simplified proofs, both during the 25th G\"okova Geometry and Topology Conference and during later conversations and correspondences. They also thank anonymous referees for many helpful suggestions and corrections resulting in an improved exposition of the paper; John Etnyre for pointing out relevant omitted citations in earlier versions of this manuscript; Otto van Koert for helpful conversations, interest in this work, and feedback on simply connected contact $5$-manifolds; Benjamin Filippenko and Zhengyi Zhou for clarifying subtleties and pointing out references regarding polyfolds and their literature.

\section{Background} \label{preliminaries}

In this section, we recall various basic facts about open book decompositions, Lefschetz fibrations, and almost complex structures, and discuss some examples. Readers who are already familiar with the basics are encouraged to skip this section and proceed to Section \ref{Handleattachment}, and later consult the current section for the relevant notation and definitions.\smallskip

Unless otherwise indicated, throughout this paper, all contact structures are positive and co-oriented. Contact and symplectic manifolds will be oriented by their contact and symplectic structures, respectively.

\subsection*{Open book decompositions} 

An \textit{open book decomposition} of an oriented $(2n+1)$-dimensional manifold $M$ is a pair $(B,\pi)$, where $B$ is a codimension $2$ submanifold of $M$ with trivial normal bundle, and $\pi: M\backslash B \rightarrow S^{1}$ is a fiber bundle, such that $\pi$ agrees with the angular coordinate $\theta$ on the normal disk $\mathbb{D}^2$ when restricted to a neighborhood $B \times \mathbb{D}^2$ of $B$. The closure of the fibers in the fibration are called \emph{pages} and $B$ is called the \emph{binding} of the open book decomposition.\smallskip

Alternatively, an \textit{abstract open book decomposition} of a $(2n+1)$-dimensional manifold $M$ is a pair $(W, \varphi)$, where $W$ is a compact $2n$-dimensional manifold with boundary, and $\varphi: W \rightarrow W$ is a diffeomorphism which restricts to the identity on a neighborhood of $\partial W$, such that
\begin{equation*}
M= W_{\varphi}\bigcup \left(\mathbb{D}^2 \times \partial W \right) 
\end{equation*}
Here, we denote
\begin{equation*}
W_{\varphi}=[0,1] \times W \bigm/ (0, z) \sim (1, \varphi(z)),
\end{equation*}
the mapping torus of $\varphi$.\smallskip

The map $\varphi$ is the \textit{monodromy} and the submanifold $W$ is the \textit{page} of the open book decomposition. We denote $M=\textbf{OB}(W,\varphi)$. \smallskip

\begin{definition}[Giroux]\label{def:Girouxform}
An open book decomposition is said to \textit{support a contact structure} $\xi$ on $M$ if it is the kernel of a contact form $\lambda$ satisfying the following:
\begin{enumerate}
\item $\lambda$ restricts to a contact form on the binding and
\item $d\lambda$ is positively symplectic on the pages $W$, and the orientation on $B$ induced by the contact form agrees with the boundary orientation on $B$ induced by the symplectic form on $W$.
\end{enumerate}

The above contact form $\lambda$ is called a \textit{Giroux form}.
\end{definition} 

\subsection*{Symplectic Lefschetz fibrations}
Let $E$ be a compact $2n$-dimensional manifold with corners, whose boundary is the union of a ``horizontal boundary'' $\partial_h E$ and a ``vertical boundary'' $\partial_v E$ meeting in a codimension 2 corner. Let $\omega=d \lambda$ be an exact symplectic form on $E$ such that both pieces of the boundary are convex. Now consider a smooth map $f: E \to \mathbb{D}^2$ with finitely many critical points $\textit{Crit}(f)$, and denote a regular fiber by $F$. We then say that  $f: E \to \mathbb{D}^2$ is an \textit{exact symplectic Lefschetz fibration} if it satisfies the following properties: 
\begin{enumerate}

\item \textit{(Lefschetz-type critical points)}\\
The critical points of $f$ are nondegenerate, isolated and belong to the interior of $E$. For any $p \in \textit{Crit}(f)$, there are orientation preserving local complex coordinates $(z_{1},\dots , z_{n})$ about $p$ on $E$ and $f(p)$ on $\mathbb{D}^2$ such that, with respect to these coordinates, $f$ is given by the complex map $$f(z_1, \dots, z_n)= z_1^2 + \cdots + z_n^2.$$  Moreover, there is at most one critical point in each fiber of $f$.\smallskip

\item \textit{(Symplectic fibers)}\\  
Denote by $E_z$ the fiber $f^{-1}(z)$ for any $z \in \mathbb{D}^2$. We require that the restriction of $\omega|_{E_z \setminus \textit{Crit}(f)}$ is symplectic, and that the boundary of all fibers is convex. \smallskip

\item \textit{(Conditions on the boundary)}\\
We require that $$\partial_v E= f^{-1}(\partial \mathbb{D}^2), \ \ \partial_h E= \bigcup_{z \in \mathbb{D}^2} \partial (f^{-1}(z)),$$ and $f|_{\partial_v E}: \partial_v E \to \partial \mathbb{D}^2$ and $f|_{\partial_h E}: \partial_h E \to \mathbb{D}^2$ are smooth fiber bundles. Moreover, there is a neighborhood $N(\partial_h E)$ of $\partial_h E$ such that the restriction map $f|_{N(\partial_h E)} : N(\partial_h E) \to  \mathbb{D}^2$ is a product fibration $\mathbb{D}^2 \times N(\partial F)$ where $N(\partial F)$ denotes a neighborhood of $\partial F$. 
\end{enumerate}

We note that the corners of $E$ can be smoothed to make the resulting manifold into an honest Liouville domain $(W, \omega, \lambda)$. 

\subsection*{Iterated planar contact manifolds}
An important recent result of Giroux and Pardon is the following:
\begin{thm}\cite[Thm.\ 1.10]{GP}
Every Weinstein domain $W$ is obtained, up to deformation equivalence, by attaching critical Weinstein handles to a stabilization $W_0\times \mathbb{D}^2$ of another Weinstein domain $W_0$ (the \emph{fiber}) along a collection of Lagrangian spheres.
\end{thm}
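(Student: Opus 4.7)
The plan is to reduce the statement to the existence of a Weinstein-compatible Lefschetz fibration $f : W \to \mathbb{D}^2$ and then extract the decomposition from the geometry of such a fibration. Concretely, suppose $f$ has a regular fiber $W_0$ which is itself a Weinstein subdomain of $W$, and only nondegenerate complex-Morse critical points $p_1,\dots,p_N$ with distinct critical values. Choose a smoothly embedded disk $D \subset \mathbb{D}^2$ disjoint from all critical values; over $D$ the symplectic parallel transport of $\omega$ trivialises $f^{-1}(D)$, and after corner smoothing this piece is Weinstein-deformation equivalent to the stabilisation $W_0 \times \mathbb{D}^2$. Each critical value $f(p_i)$ is then reached by an embedded arc from $\partial D$, along which parallel transport defines a Lagrangian vanishing sphere $L_i \subset W_0$. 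By the standard Lefschetz local model, passing the arc over $f(p_i)$ has the effect of attaching a critical Weinstein $n$-handle to $W_0 \times \mathbb{D}^2$ along $L_i$. Iterating over all critical values produces the claimed decomposition.

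The real content is the construction of the Lefschetz fibration, and here the plan is to follow the approximately holomorphic programme of Donaldson, adapted to the open Weinstein setting. First, choose a compatible almost complex structure $J$ on $W$ for which the Liouville vector field is $J$-gradient-like for a plurisubharmonic exhausting function $\phi$. Second, take a Hermitian line bundle $L \to W$ with a connection whose curvature approximates $-i\omega$, and for $k$ large produce a pair $(s_0^k, s_1^k)$ of asymptotically $J$-holomorphic sections of $L^{\otimes k}$. Third, invoke the local and global transversality estimates of Donaldson (and their refinements by Auroux, Ibort--Mart\'inez-Torres) to arrange, after perturbation, that the ratio $f_k = s_1^k / s_0^k : W \to \mathbb{C}P^1$ has only nondegenerate Lefschetz singularities and that the common zero locus $\{s_0^k = s_1^k = 0\}$ is transverse.

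The main obstacle is the interaction with the Weinstein boundary. Unlike in the closed case, one must simultaneously (i) push the base locus out of the interior so that the generic fibre of $f_k$ is a genuine Weinstein subdomain rather than a blow-up, and (ii) homotope the Weinstein structure so that the Liouville vector field is transverse to the fibres near $\partial W$ and gradient-like for a function whose sublevel sets are built by handle attachments adapted to $f_k$. This is achieved by a careful relative version of the asymptotically holomorphic construction: near $\partial W$ one chooses the sections to respect the contact structure, using the Liouville collar to glue a standard model of a Lefschetz fibration over an annulus to the interior construction. The output is a Lefschetz fibration $f : W \to \mathbb{D}^2$ whose total space carries a Weinstein structure deformation equivalent to the original one.

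Once such an $f$ is in hand, the decomposition extracted in the first paragraph is essentially formal: the fact that Lefschetz thimbles are Lagrangian disks with Legendrian boundary, and that attaching a critical Weinstein handle along this Legendrian is the Weinstein counterpart to capping off the vanishing cycle, is the standard dictionary between Lefschetz fibrations and Weinstein handle decompositions. The ``deformation equivalence'' in the statement accounts both for the auxiliary choices (of $J$, of the collar model, of the arcs to the critical values) and for the smoothing of corners between the piece $W_0 \times \mathbb{D}^2$ and the attached handles.
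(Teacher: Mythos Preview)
This theorem is not proved in the present paper: it is quoted verbatim from Giroux--Pardon \cite{GP} as background, with no proof given here. So there is no ``paper's own proof'' to compare your proposal against.

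That said, your sketch is a reasonable high-level outline of the actual Giroux--Pardon argument: the reduction to a Weinstein-compatible Lefschetz fibration followed by the Donaldson--Auroux asymptotically holomorphic technology is indeed their strategy. Be aware, though, that the technical heart of \cite{GP} --- making the asymptotically holomorphic construction interact correctly with the Liouville/Weinstein structure and the boundary, and obtaining a fibration whose fibers are themselves Weinstein --- is substantially more delicate than your paragraph on ``the main obstacle'' suggests, and your description of how the boundary issue is handled is vague enough that it would not stand as a proof on its own. If you intend to actually reproduce the argument rather than cite it, you would need to engage with those details directly.
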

See \cite{GP} for a more precise statement. In other words, every Weinstein domain admits a Weinstein Lefschetz fibration over $\mathbb{D}^2$ with Weinstein fibers. Once we are given an exact symplectic Lefschetz fibration on a Weinstein domain, one can further construct an exact symplectic Lefschetz fibration on the codimension two Weinstein fiber, and iterate this process until the Liouville fiber is $4$-dimensional. This idea motivates the following definitions introduced in \cite{Acu}. 

\begin{definition} \label{IPLF}
An \textit{iterated planar Lefschetz fibration} $f: (W^{2n}, \omega) \to \mathbb{D}^2$ on a $2n$-dimensional Weinstein domain $(W^{2n}, \omega)$ is an exact symplectic Lefschetz fibration satisfying the following properties: 

\begin{enumerate}

\item There exists a sequence of exact symplectic Lefschetz fibrations $f_i:(W^{2i}, \omega_i) \to \mathbb{D}^2$ for $i=2, \dots, n$ with $f=f_n$.

\item The total space $(W^{2i}, \omega_{i})$ of $f_{i}$ is a regular fiber of $f_{i+1}$, for $i=2, \dots, n-1$.

\item $f_2: (W^4, \omega_2) \to \mathbb{D}^2$ is a planar Lefschetz fibration, i.e. the regular fiber of $f_2$ is a genus zero surface with nonempty boundary, which we denote by $W^2$.
\end{enumerate}

\subsection*{Examples: $A_k$-singularities} For $n\geq 2$, the unit disk cotangent bundle $W^{2n}=\mathbb{D}^{*}S^n$ admits an iterated planar Lefschetz fibration where each regular fiber is $\mathbb{D}^{*}S^{n-1}$, and the Lefschetz fibration on $\mathbb{D}^{*}S^2$ is planar with fibers $\mathbb{D}^{*}S^1=[0,1] \times S^1$. More generally, consider the $A_k$-singularity, which is given by $$A_k= \{(z_1, \dots, z_n)\in \mathbb{C}^n \mid z_1^2+\dots+z_{n-1}^2+z_{n}^{k+1}=1\} \subset (\mathbb{C}^n, \omega_{std})$$ for $n\geq 3$ and $k\geq2$ (see \cite{Mil} for a more general treatment of complex singularity theory, Milnor fibrations and Brieskorn varieties, and \cite{KvK} for connections to contact topology). It is important to note that the $A_k$-singularity can be expressed as a plumbing of $k$ copies of $\mathbb{D}^{*}S^{n-1}$. Observe that each regular fiber of the Lefschetz fibration on the $A_k$-singularity, defined by the projection onto the last coordinate $z_n$, is $\mathbb{D}^{*}S^{n-1}$. This observation together with the existence of an iterated planar Lefschetz fibration on $\mathbb{D}^{*}S^{n-1}$ imply that the $A_k$-singularity admits an iterated planar Lefschetz fibration.\smallskip

Here we remark that not every $4$-manifold with nonempty boundary admits a planar Lefschetz fibration over $\mathbb{D}^2$. As a counterexample, consider  $T^2 \times \mathbb{D}^2$. Assume to the contrary that $T^2 \times \mathbb{D}^2$ admits a planar Lefschetz fibration. Then there must be a planar Stein fillable contact structure on the boundary of $T^2 \times \mathbb{D}^2$. Note that $\partial(T^2 \times \mathbb{D}^2) = T^3$, which admits a unique Stein fillable contact structure \cite{Eli3} and is known to be nonplanar \cite{Etn}. Hence, $T^2 \times \mathbb{D}^2$ does not admit such a fibration.\smallskip

If $f: W \rightarrow \mathbb{D}^2$ is an iterated planar Lefschetz fibration, then, after smoothing the corners, the boundary of $W$ inherits an open book decomposition whose pages are diffeomorphic to the regular fibers of $f$. The following definition is motivated by looking at the open book decomposition induced by the boundary restriction of an iterated planar Lefschetz fibration. 
\end{definition}

\begin{definition} \label{IPOB}
An \textit{iterated planar open book decomposition} of a contact manifold $(M^{2n+1}, \xi)$ is an open book decomposition $\textbf{OB}(W, \varphi)$ whose page $W$ admits an iterated planar Lefschetz fibration.
\end{definition}

In what follows, we make use of the higher-dimensional correspondence due to Giroux \cite{Gir} and iterated planar open book decompositions to define iterated planar contact manifolds.

\begin{definition} \label{IPCM}
For any $n>1$, an \textit{iterated planar contact manifold} $(M, \xi)$ is a $(2n+1)$-dimensional contact manifold supported by an open book decomposition whose Weinstein page admits an iterated planar Lefschetz fibration.
\end{definition}

\subsection*{Examples: Simply-connected contact $5$-manifolds.} We recall the classification of simply-connected $5$-manifolds due to Barden \cite{Barden}. Roughly speaking, a simply-connected $5$-manifold is determined up to diffeomorphism by its second homology group and its second Stiefel-Whitney class. Every such manifold decomposes uniquely into a connected
sum of prime manifolds $M_k$, with $1\leq k \leq \infty$ a prime power whenever finite, with possibly one further summand $X_j$
with $j = -1$ or $1 \leq j \leq \infty$. The manifold $M_k$ is spin and has $H_2(M_k)\cong \mathbb{Z}_k\oplus \mathbb{Z}_k$, for $1< k < \infty$, and moreover $M_1 \cong S^5$, $M_\infty \cong S^2 \times S^3$. $M_1$ only appears in its own prime decomposition. $X_{-1}$ is the \emph{Wu manifold}, with $H_2(X_{-1})\cong \mathbb{Z}_2$; $X_j$ is non-spin and satisfies $H_2(X_j)\cong \mathbb{Z}_{2^j}\oplus \mathbb{Z}_{2^j}$ for $1\leq j <\infty$; and $X_\infty \cong S^2\widetilde{\times}S^3$, the non-trivial $S^3$-bundle over $S^2$, with $H_2(X_\infty)\cong \mathbb{Z}$.\smallskip

Geiges shows in \cite{Geiges} that simply-connected $5$-manifolds admit contact structures whenever they admit \emph{almost} contact structures (i.e.\ a reduction of the tangent bundle to $U(2)\times \mathbf{1}$, which is a purely topological condition), and that this is equivalent to the vanishing of the third Stiefel-Whitney class. Every $M_k$ carries an almost contact structure, and among the $X_j$'s, only $X_\infty$ does. In \cite{Otto}, van Koert provides supporting open book decompositions for contact structures realizing all homotopy classes of almost contact structures, for each simply-connected $5$-manifold. The monodromy of all the open books corresponding to $M_\infty$ and $X_\infty$ are trivial, and their bindings consist of Lens spaces $L(p,1)$, with $p>3$, and various contact structures, which are all strongly filled by the pages and hence tight. When combined with \cite[Thm.\ 3.3]{SS}, and \cite[Thm.\ 1]{Wen}, one obtains:

\smallskip

\emph{Every almost contact structure in $M_\infty$, $X_\infty$ and $M_1$, as well as in connected sums of these, is achieved by an iterated planar contact structure.}

\smallskip

Among these, we have both the standard tight and overtwisted contact structures on $M_1\cong S^5$, as well as the standard Stein-fillable contact structure on $M_\infty\cong ST^*S^3$. We remark that the open books constructed by van Koert for contact structures in $M_k$, for $1<k<\infty$, are not iterated planar: The pages have the structure of Lefschetz fibrations with fibers having positive genus.

\medskip

A similar discussion on iterated planarity of simply-connected contact 5-manifolds and further work on iterated planarity of contact manifolds will appear in \cite{AEO}.

\subsection*{Almost complex and stable Hamiltonian structures} An \textit{almost complex structure} $J$ on $W^{2n}$ is a linear isomorphism $J: TW \rightarrow TW$ such that $J^2= -1$, and $(W,J)$ is an \emph{almost complex} manifold. An almost complex structure $J$ on a symplectic manifold $(W, \omega)$ is said to be \textit{compatible} with $\omega$ (or \textit{$\omega$-compatible}) if $\omega(Ju, Jv)=\omega(u, v)$, for any $u, v \in TW$, and $\omega(v, Jv) > 0$ for any nonzero vector $v \in TW$. It is well-known that the set of all almost complex structures on a symplectic manifold $(W, \omega)$ is nonempty and contractible. \smallskip

Let $(F, j)$ be a Riemann surface and $(W, J)$ be an almost complex manifold. A \textit{$J$-holomorphic (or pseudoholomorphic) curve} is then a smooth map $u : (F, j) \rightarrow (W, J)$ that satisfies the Cauchy-Riemann equation $du \circ j = J \circ du$.

We now define a special class of compatible almost complex structures on the \emph{symplectization} $(\R \times M, d(e^s \lambda))$. The symplectization of $M$ inherits a natural splitting of the tangent bundle $$T(\R \times M)=\R\langle \partial_s\rangle \oplus \R\langle R_\lambda\rangle \oplus \xi,$$ where $\partial_s$ is the unit vector in the $\R$-direction.\smallskip

An almost complex structure $J$ on $\R \times M$ is called \emph{adjusted} if
 \begin{enumerate}
 \item $J$ is $\R$-invariant,
\item $J(\partial_s)=R_\lambda$ and $J(-R_\lambda)=\partial_s$,
\item $J(\xi)=\xi$,
\item $J|_{\xi}$ is $d\lambda$-compatible.
 \end{enumerate}
 
A \textit{stable Hamiltonian structure} $\mathcal{H}$ on a $(2n+1)$-dimensional oriented manifold $M$ is a pair $(\Omega, \Lambda)$ consisting of a closed 2-form $\Omega$ and 1-form $\Lambda$ defined on $M$ with the following properties:
\begin{enumerate}
\item $\op{ker}\Omega \subset \op{ker}d\Lambda$
\item $\Lambda \wedge \Omega^{n}>0$
\end{enumerate}

We call condition $(1)$ the \emph{stabilizing} condition, whereas condition $(2)$, the \emph{framing} condition. \smallskip

The Reeb vector field $R$ associated to $\mathcal{H}$ is defined by the equations $\Omega(R,\cdot)=0$, $\Lambda(R)=1$. If $\lambda$ is a contact form, then $(d\lambda,\lambda)$ is a stable Hamiltonian structure, which we say is a  \emph{contact} stable Hamiltonian structure. The \emph{symplectization} of $(M, \mathcal{H})$ is $(\mathbb{R} \times M, \omega_{\phi})$, where $\omega_\phi=d(\phi(s)\Lambda)+\Omega$ for $\phi: \mathbb{R} \to (-\epsilon, \epsilon)$ a strictly increasing function, and $\epsilon>0$ small. We have an analogous notion of almost complex structures adjusted to a given stable Hamiltonian structure. 

\subsection*{Symplectic fillings and pseudoconvexity} Let $(W,J)$ be an almost complex manifold with boundary $M = \partial W$. We say that $(W,J)$ has \textit{strictly pseudoconvex} boundary $(M, \xi)$ if $M$ is a regular level set of a $J$-convex function whose gradient points outward at the boundary, i.e.\ $\xi$ is the subbundle of complex tangencies $TM \cap J(TM) \subset TM$, a contact structure whose conformal symplectic structure tames the restriction of $J$. $M$ is called \textit{weakly pseudoconvex} if $d\lambda(\cdot, J\cdot)|_{\xi} \geq 0$.  \smallskip

Recall the definition of weak domination from the introduction. From \cite{MNW}, pseudoconvexity and weak domination are equivalent notions in all dimensions. $(M^{2n-1},\xi)$ is \textit{strongly fillable} if there exists a weak filling $(W^{2n},\omega)$ such that one can find a Liouville vector field $Z$ in a neighborhood of $M$, which is outwards pointing along $M$. We then call $W$ \textit{strong symplectic filling} (or \textit{convex filling}) of $M$. \smallskip

Let $(W, \omega)$ be a symplectic manifold with more than one boundary component, one of which is $(M, \xi)$. If all the boundary components of $W$ are weakly dominated by $\omega$ then $(W,\omega)$ is called a \textit{weak co-filling} of $(M,\xi)$, and $(M, \xi)$ is called \textit{weakly co-fillable}. Co-fillings of contact manifolds can be turned into honest symplectic fillings if other boundary components can be symplectically capped off.

\section{A handle attachment}\label{Handleattachment}

We consider an \textbf{IP} contact manifold $M_1^{2n-1}=\textbf{OB}(W_1^{2n-2}, \varphi)$ with an \textbf{IP} open book decomposition. We will construct a symplectic cobordism $(C^{2n},\omega_{C^{2n}})$ as described in the introduction. The pictorial reference for all of this construction is given in Figure \ref{thehandle}, which contains most of the relevant information, and which the reader will be encouraged to consult in multiple instances as new objects are introduced. We remark that a similar construction has appeared in \cite{DGZ14}, where a cap is used on the pages, rather than an inductively constructed cobordism.

\begin{figure}[h!]\centering
\includegraphics[width=0.7\linewidth]{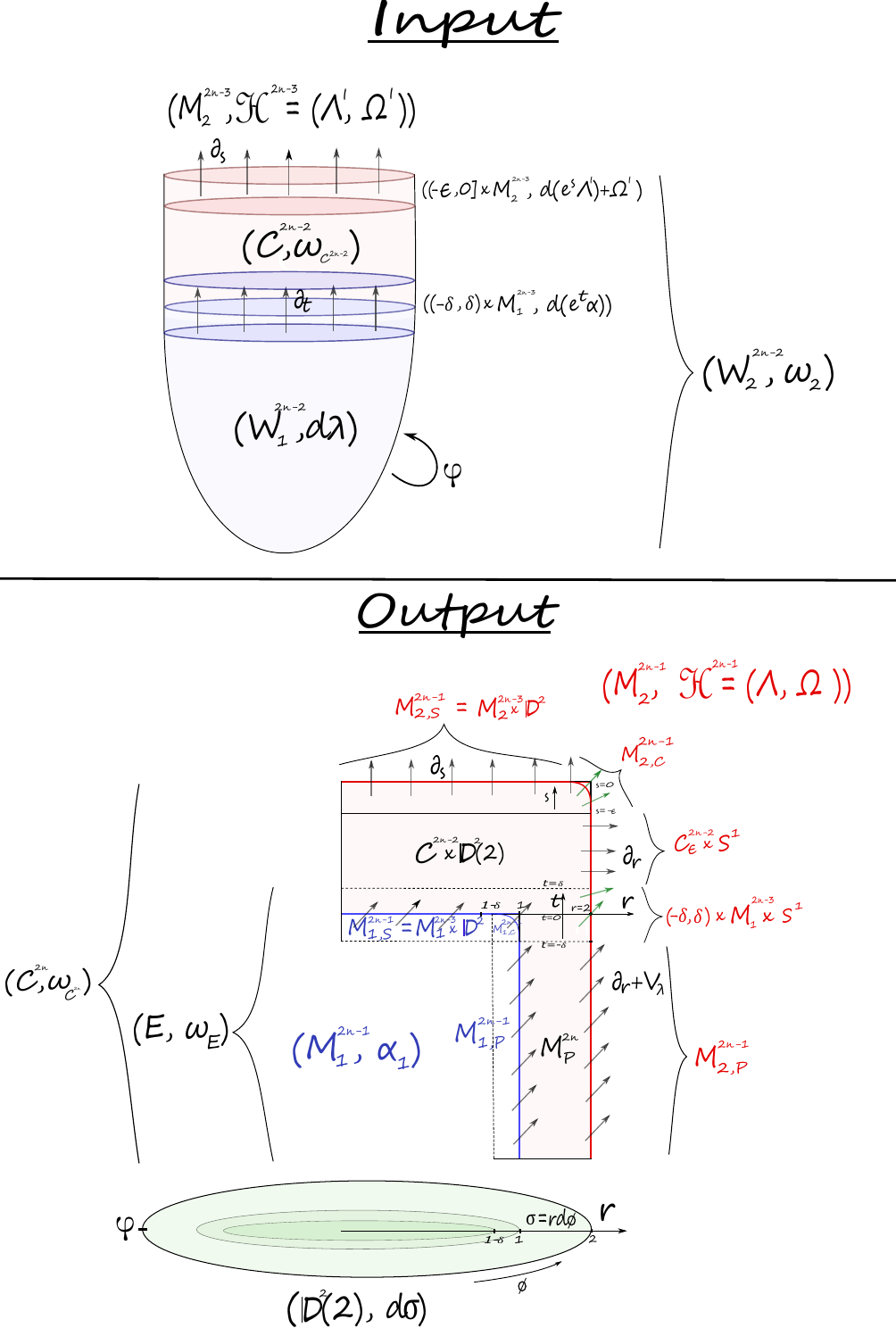}
\caption{\label{thehandle} A diagrammatic picture of the cobordism $C^{2n}$. In order to obtain $C^{2n}$ from the above diagram, one needs to rotate the output manifold in the upper left around the $S^1$-direction of the disk $\mathbb{D}^2(2)$ in the base, and glue it to itself by the action of the monodromy $\varphi$ (as indicated by the symbol $\varphi$ in $\partial \mathbb{D}^2(2)$).}
\end{figure}

\subsection*{Base case \texorpdfstring{$\mathbf{n=2}$}{}} In the case where $M_1^3$ is a planar contact 3-manifold, we make use of a construction originally due to Eliashberg \cite{Eli}, and the further adaptations in \cite{SOBD} used to define the notion of a \emph{spine removal surgery} (cf.\ \cite{Etn2}). \smallskip

By doing $0$-surgery along the binding $M_1^1$ of the planar open book decomposition in $M_1^3=\textbf{OB}(W_1^2,\varphi)$ (with respect to the page framing), we obtain the manifold $M_2^3:=S^1 \times S^2$, the unique $S^2$-fibration over $S^1$. Here, we have used that the pages of the original open book decomposition have genus zero. The resulting $4$-dimensional cobordism $C^4$ carries a symplectic form $\omega_{C^4}$, which satisfies:

\begin{itemize}
\item $\omega_{C^4}=d(e^t\alpha)$ in a collar neighborhood $[0,\delta)\times M_1^3$, where $\alpha$ is a Giroux form for the open book decomposition in $M_1^3$. 
\item $\omega_{C^4}=d(e^s d\theta) + \omega_{S^2}$ in a collar neighborhood $(-\epsilon,0]\times S^1 \times S^2$, where $\epsilon>0$, $s \in (-\epsilon,0]$, and $\omega_{S^2}$ is an area form in $S^2$.
\end{itemize}

In other words, the negative end $M_1^3$ is strongly concave, and the positive end $M_2^3$ is \emph{stable}. The locally defined vector field $\partial_s$ is stabilizing, and $M_2^3$ carries the stable Hamiltonian structure
$$
\mathcal{H}^3:=(i_{\partial_s}\omega\vert_{M_2^3}, \omega\vert_{M_2^3})=(d\theta, \omega_{S^2})
$$

Observe that its Reeb vector field is $\partial_\theta$, and its kernel $\ker d\theta=TS^2$ gives a foliation by spheres.\smallskip

Given a Weinstein manifold $(W_1^{4},\lambda)$ with contact-type boundary $(M_1^3,\alpha)$, we may attach $C^4$ on top of $W_1^{4}$ to obtain a symplectic manifold $$(W_2^{4},\omega_2):=(W_1^4,d\lambda)\bigcup_{M_1^{3}} (C^4,\omega_{C^4})$$ with stable boundary $(M_2^3,\mathcal{H}^3)$. We will use this $4$-dimensional construction as input for our handle construction in dimension $6$.

\subsection*{\textbf{Inductive step}} In arbitrary odd dimension $2n-1\geq 5$, the inductive input is as follows:

\begin{itemize}
\item An \textbf{IP} contact manifold $M_1^{2n-1}=\textbf{OB}(W_1^{2n-2}, \varphi)$, together with an \textbf{IP} open book decomposition.
\item A symplectic cobordism $(C^{2n-2},\omega_{C^{2n-2}})$ with concave contact-type boundary $(M_1^{2n-3},\alpha)=\partial (W_1^{2n-2},d\lambda)$, where $M_1^{2n-3}$ is the binding of the \textbf{IP} open book for $M_1^{2n-1}$, and $\alpha$ is a Giroux form for the open book decomposition induced by the Lefschetz fibration structure in the page $W_1^{2n-2}$; and stable positive boundary $(M_2^{2n-3},\mathcal{H}^{2n-3}=(\Lambda^\prime,\Omega^\prime))$.
\end{itemize}

Observe that $M_1^{2n-3}=\textbf{OB}(W_1^{2n-4},\varphi_1)$, where $W_1^{2n-4}$ is the regular fiber of the Lefschetz fibration in $W_1^{2n-2}$, and $\varphi_1$ is the product of positive Dehn twists along the vanishing cycles in $W_1^{2n-4}$. This means that $M_1^{2n-3}$ is again an \textbf{IP} contact manifold, and so we may assume that the symplectic cobordism $(C^{2n-2},\omega_{C^{2n-2}})$ was constructed in the previous step. \smallskip

We will denote $$(W_2^{2n-2},\omega_2)=(W_1^{2n-2},d\lambda)\cup (C^{2n-2},\omega_{C^{2n-2}}),$$ where the gluing takes place along a collar of the form $((-\delta,\delta)\times M_1^{2n-3},d(e^t\alpha))$, for some $\delta>0$. The result is a symplectic manifold with stable boundary $(M_2^{2n-3},\mathcal{H}^{2n-3})$ (See Figure \ref{thehandle}). The monodromy $\varphi$ extends to $W_2^{2n-2}$ by the identity along $C^{2n-2}$. We will also rename $(W_1^{2n-2},d\lambda)$, replacing it with the slightly enlarged copy $$(W_1^{2n-2},d\lambda) \cup ([0,\delta] \times M_1^{2n-3}, d(e^t\alpha)),$$ without changing notation.\smallskip 

In what follows, we will carry out the construction of the symplectic handle. But first, we introduce some notation.

\subsection*{\textbf{Generalized mapping tori and Giroux forms}} Let $\lambda$ be a Liouville form for a representative of the natural homotopy class of Weinstein structures associated to $W_1^{2n-2}$, such that $\varphi$ is a symplectomorphism with respect to $d\lambda$. We ask also that $\lambda=e^t\alpha$ in the collar neighborhood $(-\delta,0]\times M_1^{2n-3}$, $\delta>0$ as before, where $t$ is the coordinate in the first factor, and $\alpha$ is the given Giroux form in $M_1^{2n-3}$. We denote by $V_\lambda$ the Liouville vector field associated to $d\lambda$, and we assume that $\varphi$ is the identity in $(-\delta,0]\times M_1^{2n-3}$. By a lemma of Giroux (see \cite[Prop.\ 7]{Giroux}), up to isotopy we may assume that $\varphi$ is an \emph{exact} symplectomorphism, i.e.\ we have
$$
\varphi^*\lambda=\lambda - dh,
$$
for some positive smooth function $h:W_1^{2n-2}\rightarrow \mathbb{R}$, which we may take constant equal to $1$ in the $\delta$-collar neighborhood of $M_1^{2n-3}$. We then write 
$$
M_1^{2n-1}=M_1^{2n-3}\times\mathbb{D}^2\bigcup MT(W_1^{2n-2},\varphi),
$$
where  
$$
MT(W_1^{2n-2},\varphi):=W_1^{2n-2}\times \mathbb{R}\Big/(x,0)\sim (\varphi(x),h(x))
$$

We call $MT(W_1^{2n-2},\varphi)$ a \emph{generalized} mapping torus. Observe that, by construction, it carries a well-defined contact form $\alpha_1:=\lambda + d\phi$, where $\phi$ is the coordinate in the $\mathbb{R}$-factor. The following modification will also be useful: For any $r \in \mathbb{R}^+$, we may define 
$$ MT_r(W_1^{2n-2},\varphi)=W_1^{2n-2}\times \mathbb{R}/(x,0)\sim (\varphi(x),h(x)/r).$$ \smallskip

By construction, $MT_r(W_1^{2n-2},\varphi)$ carries a well-defined contact form $\alpha_r:=\lambda + r d\phi$ (and its diffeomorphism type is clearly $r$-independent). This construction is originally due to Giroux, and this $1$-form is the restriction of a \emph{Giroux form} to the generalized mapping torus $MT_r(W_1^{2n-2},\varphi)$. We also have the following model for a Giroux form along $M_1^{2n-3}\times \mathbb{D}^2$: If $(r,\theta)$ are polar coordinates on $\mathbb{D}^2$, we may take $\alpha_1=\sigma + \alpha$. Here, $\alpha$ is the Giroux form on $M_1^{2n-3}$, and $\sigma$ is a Liouville form on $\mathbb{D}^2$, chosen so that $\sigma=rd\theta$ near $r=1$. One needs to smooth the two expressions of $\alpha_1$ so that they glue together smoothly. The complete construction of $\alpha_1$, including how to actually glue the two expressions together, will actually follow from our construction below. Our choice of smoothing of the corner $(M_1^{2n-3}\times \mathbb{D}^2) \cap MT_1(W_1^{2n-2},\varphi)$ will implicitly do the gluing. The result will be a Giroux form $\alpha_1$ on $M_1^{2n-1}$ (so that the induced contact structure is supported by the \textbf{IP} open book decomposition in $M_1^{2n-1}$), which is modelled by the above expressions on each separate piece.\smallskip

We denote $$M_{1,S}^{2n-1}:=M_1^{2n-3}\times \mathbb{D}^2,$$ and call it the \emph{spine} of $M_1^{2n-1}$, and $$M^{2n-1}_{r,P}:=MT_r(W_1^{2n-2},\varphi),$$ and call it the $r$-\emph{paper}. \smallskip

We refer to the $1$-paper $M^{2n-1}_{1,P}$ simply as the \emph{paper} of $M_1^{2n-1}$ (the $2$-paper will be contained in the paper of the resulting manifold $M_2^{2n-1}$. \smallskip

We are now ready to carry out the construction of the symplectic handle.

\subsection*{\textbf{A ``trivial'' symplectic cobordism}} We will now define an open symplectic manifold $(E,\omega_E)$ having $M_1^{2n-1}$ as a contact-type hypersurface. Symplectically, $(E,\omega_E)$ will be symplectomorphic to an open piece of the symplectization of the contact manifold $(M^{2n-1}_1,\alpha_1)$. What follows is an adaptation of a construction in \cite{SOBD} (see also \cite{Mo}). \smallskip

We enlarge the spine $M^{2n-1}_{1,S}$ to $$\widehat{M}^{2n-1}_{1,S}:= M_1^{2n-3}\times \mathbb{D}^2(2),$$ where $\mathbb{D}^2(2)$ denotes the $2$-disk of radius $2$ in $\mathbb{C}$, and we take polar coordinates $re^{i\phi} \in \mathbb{D}^2(2)$. We pick a Liouville form $\sigma$ in $\mathbb{D}^2(2)$, so that $\sigma=rd\phi$ for $r \in (1-\delta, 2]$. We denote the associated Liouville vector field by $V_{\sigma}$, so that $V_{\sigma}=r\partial_r$ in the collar $(1-\delta,2]\times S^1\subset \mathbb{D}^2(2)$. \smallskip

Again for the same $\delta>0$ chosen before, we define $$\mathcal{N}_P^r:= (-\delta,\delta)\times M_1^{2n-3}\times \left(\mathbb{R}\Big/\frac{1}{r}\mathbb{Z}\right)\subset M_{r,P}^{2n-1},$$ which is a collar neighborhood of $\partial M_{r,P}^{2n-1}$. We take coordinates $(t,b,\phi_r)\in \mathcal{N}_P^r$, and we define $\phi:=r\phi_r \in S^1$. We also define $$\mathcal{N}_S:=M_1^{2n-3}\times (1-\delta,2]\times S^{1}\subset \widehat{M}_{1,S}^{2n-1},$$ with coordinates $(b,r,\phi) \in \mathcal{N}_S$, and 
$$ 
M^{2n}_P:= \left\{(r,w): r \in (1-\delta,2], w \in M_{r,P}^{2n-1}\right\}= \bigcup_{r \in (1-\delta,2]} M_{r,P}^{2n-1}.
$$
Moreover, we have a collar 
$$
\mathcal{N}_P:= \left\{(r,w): r \in (1-\delta,2], w \in \mathcal{N}_P^r\right\}= \bigcup_{r \in (1-\delta,2]}\mathcal{N}_P^r\subset M^{2n-1}_P.
$$

Let $E$ be the open manifold
$$
E=(-\delta,\delta]\times \widehat{M}^{2n-1}_{1,S} \bigsqcup M^{2n}_P \Big /\sim \;,
$$
where we identify a tuple $(t,b,r,\phi) \in (-\delta,\delta]\times \mathcal{N}_S \subset (-\delta,\delta]\times \widehat{M}^{2n-1}_{1,S}$, with $(r, t, b, \phi_r=\phi/r) \in \mathcal{N}_P\subset M^{2n}_P$ (see, again, Figure \ref{thehandle}). \smallskip

Observe that $E$ has boundary and corners, and contains a copy of $M_1^{2n-1}$ as a hypersurface (with corners), depicted in blue in Figure \ref{thehandle}. Indeed, we have $$M_1^{2n-1}\cong \{0\}\times M^{2n-1}_{1,S} \bigcup M^{2n-1}_{1,P}\subset E,$$ and its corner is $(\{0\}\times M^{2n-1}_{1,S})\cap M^{2n-1}_{1,P}$. We can always smooth the latter, and we shall make this smoothing explicit below. Moreover, there is a global and well-defined coordinate $r$ on $E$, as well as a coordinate $t$ (which is not globally defined). We denote by $E(t)$ the region of $E$ along which $t$ is defined. We also have a symplectic ``fibration'' $\pi:E\rightarrow \mathbb{D}^2(2)$, whose fibers change topological type. For $r>1-\delta$, the fiber over $(r,\phi)$ is $(W_1^{2n-2},d\lambda)$, and for $r\leq 1-\delta$, it is a copy of the collar neighborhood $((-\delta,\delta]\times M_1^{2n-3}, d(e^{t}\alpha))$. \smallskip

The boundary of $E$ can be written as 
$$
\partial E=\partial_h E \cup  \partial_vE,
$$
where 
$$\partial_h E:=\{t=\delta\}=\{\delta\}\times \widehat{M}^{2n-1}_{1,S} $$
$$\partial_vE:=\{r=2\}= M^{2n-1}_{2,P}$$

The corner of $E$ is then $\partial_h E \cap \partial_v E=\{t=\delta, r=2\}$. \\

We now construct a symplectic form $\omega_E$ in $E$. Observe that we have a well-defined $1$-form $\lambda_E:=\lambda + \sigma$ on $E$, and it is straightforward to check that it is actually Liouville. Then $\omega_E:=d\lambda_E$ is symplectic. Denote by $V_E$ the associated Liouville vector field, which is nothing else than $V_E=V_\lambda + V_\sigma$. Observe that $V_E$ is manifestly positively transverse to the hypersurface $M_1^{2n-1}\subset E$, away from its corner. Along the region $\{r>1-\delta\}\cap E(t)$, which contains the corner of $E$ and that of $M_1^{2n-1}\subset E$, we have $\lambda_E=e^t\alpha+rd\phi$, and so $V_E=\partial_t+r\partial_r$. This means that $V_E$ will be positively transverse to any reasonable smoothing inside $E$ of the corner of $M_1^{2n-1}$. For example, we may choose smoothing functions $F,G:(-\delta,\delta)\rightarrow (-\delta,0]$ satisfying:
$$
\left\lbrace\begin{array}{l} (F(\rho),G(\rho))=(\rho,0), \mbox{ for } \rho\leq-\delta/3\\
G^\prime(\rho)<0, \mbox{ for } \rho>-\delta/3\\
F^\prime(\rho)>0, \mbox{ for } \rho<\delta/3\\
(F(\rho),G(\rho))=(0,-\rho), \mbox{ for } \rho\geq\delta/3
\end{array}\right.
$$

And then we may replace the region $M_1^{2n-1}\cap \{t\in (-\delta,0], r \in (1-\delta,1]\}$, containing the corner of $M_1^{2n-1}$, with the smoothed corner
\begin{equation}
\begin{split}
M_{1,C}^{2n-1}:=&\{(r=1+F(\rho), t=G(\rho),b, \phi_{1+F(\rho)}=\phi/(1+F(\rho))):\\
&\;(\rho,b, \phi) \in (-\delta,\delta)\times M_1^{2n-3}\times S^1 \}
\end{split}
\end{equation}

See Figure \ref{thehandle}. We then obtain a hypersurface of the form 
$$
\widetilde{M}_1^{2n-1}=\widetilde{M}^{2n-1}_{1,S} \cup M^{2n-1}_{1,C} \cup \widetilde{M}_{1,P}^{2n-1},
$$
where $$\widetilde{M}^{2n-1}_{1,S}=M^{2n-1}_{1,S}\backslash \mathcal{N}_S=M_1^{2n-3}\times \mathbb{D}^2(1-\delta),$$ and $$\widetilde{M}_{1,P}^{2n-1}=M_{1,P}^{2n-1}\backslash\mathcal{N}_P^1.$$

Clearly, we have  that $$\widetilde{M}_1^{2n-1} \cong M_1^{2n-1},$$ and so we will drop the tilde from all the notation. Then, $V_E$ is positively transverse to $M_1^{2n-1}$. It follows that $M_1^{2n-1}$ is indeed a contact-type hypersurface, inheriting the contact form $\alpha_1:= \lambda_E\vert_{M_1^{2n-1}}$. Our construction actually implies that $\alpha_1$ is a Giroux form for $M_1^{2n-1}$.  

\subsection*{\textbf{The handle}} We now construct a handle, which we will attach on top of the manifold $E$. Topologically, this handle is of the form $C^{2n-2}\times \mathbb{D}^2(2)$, and we attach it on top of $E$ by the obvious identification (see Figure \ref{thehandle}). Symplectically, we endow it with the $2$-form $\omega_{C^{2n-2}}+d\sigma$, which is indeed symplectic, and by construction this glues smoothly to $\omega_E$. So we get a symplectic cobordism
$$
(C^{2n},\omega_{C^{2n}}):=(E,\omega_E)\cup \left(C^{2n-2}\times \mathbb{D}^2(2),\omega_{C^{2n-2}}+d\sigma\right).
$$

This cobordism contains a collar $$((-\epsilon,0] \times M_2^{2n-1} \times \mathbb{D}^2(2), d(e^s\Lambda^\prime)+\Omega^\prime+ d\sigma),$$ and again has a corner, corresponding to $\{s=0\}\cap\{r=2\}$ in this collar, where $s \in (-\epsilon,0]$. We smooth this corner similarly as before, where we replace $\delta$ by $\epsilon$, and $\rho$ by $\tau \in (-\epsilon,\epsilon)$ in the choice of smoothing functions $F,G$, and we set $r=2+F(\tau)$, $s=G(\tau)$. We rename $C^{2n}$, replacing it with its smoothed version, and where we also remove the region of $E$ which is identified with the negative symplectization of the hypersurface $M_1^{2n-2}$ (this version of $C^{2n}$ is depicted in red in Figure \ref{thehandle}). Then $C^{2n}$ now has $M_1^{2n-2}$ as a concave contact-type boundary component, and its positive boundary component can be written as
$$
M_2^{2n-1}=M^{2n-1}_{2,S} \cup M^{2n-1}_{2,C} \cup \widehat{M}_{2,P}^{2n-1}.
$$
Here we have the following:
\begin{enumerate}
\item $M^{2n-1}_{2,S}=M_2^{2n-3} \times \mathbb{D}^2(2-\epsilon)$ is the \emph{spine} of $M_2^{2n-1}$,
\item $M^{2n-1}_{2,C} \cong (-\epsilon,\epsilon)\times M_2^{2n-3} \times S^1$ is its \emph{smoothed corner}, and
\item if $W_{2,\epsilon}^{2n-2}:= W_2^{2n-2}\backslash((-\epsilon,0]\times M_2^{2n-1})$ and $C_\epsilon^{2n-2}:=C^{2n-2}\backslash((-\epsilon,0]\times M_2^{2n-1})$, then $\widehat{M}_{2,P}^{2n-1}=MT_2(W_{2,\epsilon}^{2n-2},\varphi)=M_{2,P}^{2n-1}\cup (C_\epsilon^{2n-2}\times S^1)$ is its \emph{paper}. 
\end{enumerate}

Topologically, this is just an open book decomposition with page $W_2^{2n-2}$ and monodromy $\varphi$. But, since $W_2^{2n-2}$ is not Liouville, $M_2^{2n-1}$ does not carry any obvious contact structure supported by this open book decomposition. However, it does carry a stable Hamiltonian structure, constructed below. We call this decomposition a \emph{topological} open book decomposition, and denote it by $$M_2^{2n-1}=\textbf{TOB}(W_2^{2n-2},\varphi).$$ 

In summary, we have
$$
\partial C^{2n}=-\textbf{OB}(W_1^{2n-2},\varphi) \ \bigsqcup \ \textbf{TOB}(W_2^{2n-2},\varphi).
$$

\subsection*{\textbf{The stable Hamiltonian structure}} We now make the boundary component $M_2^{2n-1}\subset \partial C^{2n}$ a \emph{stable} one. We write down a vector field $X$, which is Liouville near $M_1^{2n-1}$, and stabilizing near $M_2^{2n-1}$, as follows. \smallskip

Choose bump functions
$$
\beta_1:(-\epsilon,0]\rightarrow [0,1],
$$
$$
\beta_0: [0,\delta)\rightarrow [0,1]
$$
satisfying:
\begin{itemize}
\item $\beta_1\equiv 0$ near $s=-\epsilon$, $\beta_1\equiv 1$ near $s=0$.
\item $\beta_0 \equiv 0$ near $t=\delta$, $\beta_0\equiv 1$ near $t=0$.
\end{itemize}

Denote by $C^{2n}(s)$ the region of $C^{2n}$ in which the coordinate $s$ is defined, and similarly denote $C^{2n}(t)$. \smallskip

We then define the vector field $X$ on $C^{2n}$ by
\begin{equation}\label{stabvf}
X=\left\{ \begin{array}{lr} \beta_1(s)\partial_s + V_\sigma, & \mbox{ along } C^{2n}(s)\\

V_\sigma, & \mbox{ along } C^{2n}\backslash (E \cup C^{2n}(s))\\

\beta_0(t)\partial_t + V_\sigma, &\mbox{ along } C^{2n}(t)\\

V_\lambda + r\partial_r, & \mbox{ along } M_P^{2n}\backslash C^{2n}(t) 

\end{array}\right.
\end{equation}

Then $X$ is transverse to $\partial C^{2n}$, negatively at $-M_1^{2n-1}$, along which is Liouville, and positively at $M_2^{2n-1}$. It is straightforward to check that $X$ actually stabilizes $M_2^{2n-1}$, and it is actually Liouville along $M_{2,P}^{2n-1}\subset \widehat{M}_{2,P}^{2n-1}\subset M_2^{2n-1}$. We obtain a stable Hamiltonian structure 
$$
\mathcal{H}^{2n-1}=(\Lambda,\Omega):=(i_X \omega_{C^{2n}}\vert_{M_2^{2n-1}}, \omega_{C^{2n}}\vert_{M_2^{2n-1}}), 
$$
which is contact along $M_{2,P}^{2n-1}$. \smallskip

This finishes the inductive construction.

\begin{remark}\label{contact} 
Let us write explicit expressions for $\mathcal{H}^{2n-1}$.\\

Along $M_{2,P}^{2n-1}$, we have
\begin{equation}\label{localmodel5}
\mathcal{H}^{2n-1}\vert_{M_{2,P}^{2n-1}}=\left(\lambda+2d\phi,d\lambda\right), 
\end{equation}
and we clearly see in this expression that it is contact along this region.\\

Along the collar $(-\delta,\delta)\times M_1^{2n-1}\times S^1 \subset \widehat{M}_{2,P}^{2n-1}$, we obtain

\begin{equation}\label{localmodel4}
\mathcal{H}^{2n-1}\vert_{(-\delta,\delta)\times M_1^{2n-1}\times S^1}=\left(\beta_0(t)e^t\alpha+2d\phi,d(e^t\alpha)\right). 
\end{equation}

Along $C_\epsilon^{2n-2}\times S^1 \subset \widehat{M}_{2,P}^{2n-1}$, we get
\begin{equation}\label{localmodel2}
\mathcal{H}^{2n-1}\vert_{C_\epsilon^{2n-2}\times S^1}=\left(2d\phi, \omega_{C^{2n-2}} \right). 
\end{equation}

Along the smoothed corner $M_{2,C}^{2n-1}$, we have 
\begin{equation}\label{localmodel3}
\begin{split}
\mathcal{H}^{2n-1}\vert_{M_{2,C}^{2n-1}}=&\left(e^{G(\tau)}\beta_1(G(\tau))\Lambda^\prime+(2+F(\tau))d\phi,\right.\\ &\left.\;d(e^{G(\tau)}\Lambda^\prime) + \Omega^\prime + F^\prime(\tau)d\tau\wedge d\phi\right).
\end{split}
\end{equation}

Finally, along $M_{2,S}^{2n-1}$, we see that 
\begin{equation}\label{localmodel}
\mathcal{H}^{2n-1}\vert_{M_{2,S}^{2n-1}}=\left(\Lambda^\prime+\sigma, \Omega^\prime + d\sigma\right).
\end{equation}

Observe that the expression (\ref{localmodel}) is contact along the regions where $\mathcal{H}^{2n-3}=(\Lambda^\prime,\Omega^\prime)$ is contact. For example, assuming $\mathcal{H}^{2n-3}$ is the stable Hamiltonian structure arising from this construction in the previous step, this is the case for $M_{2,P}^{2n-3} \times \mathbb{D}^2(2-\epsilon)\subset M_{2,S}^{2n-1}$.
\end{remark}

\section{Modification for weak fillings}\label{modification}

In this section, we modify the symplectic cobordism $C^{2n}$ so that we may glue it on top of an arbitrary symplectic manifold $(W^{2n},\omega)$ with a weak boundary component $M_1^{2n-1}\subset \partial W^{2n}$, an \textbf{IP} contact manifold. What follows is an adaptation of the construction in \cite[p.\ 63-64]{SOBD}.\smallskip

We assume that we are given a closed $2$-form $\omega$ in $M_1^{2n-1}$, such that $\omega\vert_\xi>0$, where $\xi=\ker \alpha_1$ is the contact structure in $M_1^{2n-1}$ associated to the Giroux form $\alpha_1$, and supported by the \textbf{IP} open book decomposition. The $2$-form $\omega$ takes the role of $\omega\vert_{M_1^{2n-1}}$ in the presence of $(W^{2n},\omega)$.\smallskip

By induction, we can arrange a closed $2$-form $\eta$ in $M_1^{2n-1}$ such that:
\begin{enumerate}
\item $[\eta]=[\omega] \in H_{dR}^2(M_1^{2n-1})$.
\item $\eta$ is a pullback of a closed $2$-form $\eta^\prime$ in the binding $M_1^{2n-3}$ along $M_{1,S}^{2n-1}\cup M_{1,C}^{2n-1}\cong M_1^{2n-3} \times \mathbb{D}^2$, such that, recursively, $\eta^\prime$ also satisfies conditions (1) and (2) for dimension $2n-3$ (replacing $\omega$ by $\omega\vert_{M_1^{2n-3}}$ in (1)).
\end{enumerate}

The second condition follows from contractibility of $\mathbb{D}^2$, and yields, in particular, that $\eta$ is independent of $\phi \in S^1$ along $M_{1,S}^{2n-1}\cup M_{1,C}^{2n-1}$. Observe that, because of dimensional reasons, $\eta^\prime\equiv 0$ in the base case $n=2$, and (2) is automatic. 

\subsection{Cohomological extension} We now wish to extend $\eta$ to a closed $2$-form to $C^{2n}$. We do this, again, by induction. We will show that we can always extend closed $2$-forms in $M_1^{2n-1}$ satisfying condition (2) above, to suitable closed $2$-forms in $C^{2n}$. 

\subsection*{Base case \texorpdfstring{$\mathbf{n=2}$}{}} The $4$-dimensional cobordism $C^4$ is a result of attaching a collection of $2$-handles to the planar contact $3$-manifold $M_1^3$. In particular, we have $H_3(C^4,M_1^3;\mathbb{R})=0$, since there are no $3$-handles, and it follows from the long exact sequence of the pair $(C^4,M_1^3)$ that the map $H_2(M_1^3;\mathbb{R})\rightarrow H_2(C^4;\mathbb{R})$ is injective. By duality (over the \emph{field} $\mathbb{R}$), the map $H^2(C^4;\mathbb{R})\rightarrow H^2(M_1^3 ;\mathbb{R})$ is surjective. In terms of de Rham cohomology, this means that we can always extend closed $2$-forms in $M_1^3$ to closed $2$-forms in $C^4$. Moreover, by topological reasons, i.e.\ contractibility of the interval, we can choose the extension so that it is independent on the coordinates $s$ and $t$ along the collars where each are defined (meaning independence on both coefficients and differentials). This implies that $\eta$ is a $2$-form in $S^1\times S^2$, and, since $$H_{dR}^2(S^1\times S^2)=H_{dR}^2(S^2)\cong\mathbb{R},$$ $\eta$ can be taken to be a constant multiple of $\omega_{S^2}$ near $M_2^3=S^1\times S^2$.

\subsection*{Inductive step} We assume by induction that we can always extend closed $2$-forms on $M_1^{2n-3}$ satisfying condition (2) (in dimension $2n-3$) to closed $2$-forms on $C^{2n-2}$. Now, take $\eta$ a closed $2$-form in $M_1^{2n-1}$ satisfying (2). First, we observe that we can always extend it to the portion $$M_P^{2n}=\bigcup_{r \in [1,2]} M_{r,P}^{2n-1}\cong M_{1,P}^{2n-1}\times [1,2] \subset C^{2n}$$ in the obvious way, i.e.\ via pull-back, so that it is $r$-independent. The rest of $C^{2n}$ is diffeomorphic to $C^{2n-2}\times \mathbb{D}^2(2)$. Identify $\eta$ with a closed $2$-form $\eta^\prime$ in $M_1^{2n-3}$ along $$M_1^{2n-3}\times \mathbb{D}^2(2)\subset C^{2n}$$ which also satisfies (2), and, by the induction hypothesis, choose an extension of $\eta^\prime$ to $C^{2n-2}$ (also called $\eta^\prime$). Again, by topological reasons, we can assume that $\eta^\prime$ is $t$-independent in the collar neighborhood $$(-\delta,\delta)\times M_1^{2n-3} \subset C^{2n-2},$$ and $s$-independent in $$(-\epsilon,0]\times M_2^{2n-3}\subset C^{2n-2}.$$ We then extend $\eta$ to $C^{2n-2}\times \mathbb{D}^2(2)$ via pullback, so that it is $\mathbb{D}^2(2)$-independent. Observe that the two extensions glue together smoothly, and that our assumptions on $\eta^\prime$ of $s$- and $t$-independence are compatible with the construction of $\eta$ (when this one takes over the role of $\eta^\prime$ in the next step). This finishes the induction argument.

\begin{remark}\label{rindep}
Observe that it follows from the construction that $\eta$ is $r$-, $s$-, and $t$-independent wherever these variables are defined and $\phi$-independent along the region $C^{2n}\backslash M_P^{2n}$.
\end{remark}

\subsection{\texorpdfstring{$\mathbf{\eta}$-perturbation}{}} Let us now fix a closed $2$-form $\eta$ in $C^{2n}$, constructed inductively as above. We modify the symplectic form $\omega_{C^{2n}}$ to the $2$-form
$$
\omega_{C^{2n}}^\eta:=D\omega_{C^{2n}}+\eta,
$$
for a large constant $D\gg 1$. If $D$ is chosen large enough, this $2$-form is indeed symplectic, since the first summand then dominates, and nondegeneracy is an open condition. Since $[\omega]=[\eta]$ and $\omega\vert_\xi>0$ along $M_1^{2n-1}$, by Lemma 1.10 in \cite{MNW}, we get a symplectic form on $[0,1]\times M_1^{2n-1}$ which coincides with $\omega$ on $\{0\}\times M_1^{2n-1}$ and with $Dd\alpha_1+\eta$ on $\{1\}\times M_1^{2n-1}$, for any $D$ sufficiently large. Therefore, after attaching this symplectic cobordism $[0,1]\times M_1^{2n-1}$, we may glue the symplectic manifold $(C^{2n},\omega_{C^{2n}}^\eta)$ to any symplectic manifold $(W^{2n},\omega)$ having $M_1^{2n-1}$ as a weak boundary component. We shall denote $$\mathcal{H}_\eta^{2n-1}:=(i_X\omega_{C^{2n}}^\eta\vert_{M_2^{2n-1}}, \omega_{C^{2n}}^\eta\vert_{M_2^{2n-1}})= D\mathcal{H}^{2n-1}+(i_X \eta\vert_{M_2^{2n-1}}, \eta\vert_{M_2^{2n-1}}),$$ where $X$ is the stabilizing vector field defined in (\ref{stabvf}). Observe that since $\mathcal{H}_\eta^{2n-1}$ is a perturbation of the SHS $\mathcal{H}^{2n-1}$, the framing condition (which is open) still holds. Whereas the stability condition is not open, we will show in next section, nevertheless, that $\mathcal{H}_\eta^{2n-1}$ is indeed still a stable Hamiltonian structure along a specified region of $M_2^{2n-1}$, which is full of $2$-spheres. Observe also that the regions along which $\mathcal{H}^{2n-1}$ is contact, become weakly dominated under the $\eta$-perturbation (since weak domination is an open condition).

\section{Moduli space of spheres}\label{moduli} In this section, we construct an almost complex structure in the symplectic cobordism $C^{2n}$ constructed in Section \ref{Handleattachment}, together with a moduli space of holomorphic spheres.

\subsection{Almost complex structure on a local model} After running the inductive step of the construction of $C^{2n}$, with the particular base case we chose, from Equation (\ref{localmodel}), we obtain the following local model for the stable Hamiltonian manifold $(M_{2}^{2n-1},\mathcal{H}^{2n-1})$ around the binding $M_2^3=S^1\times S^2$ of $M_2^5\subset M_2^7 \subset \dots \subset M_2^{2n-1}$:
\begin{equation}\label{localmodelY}
\left(Y:=S^1\times S^2 \times \mathbb{D}_1^2\times \dots \times \mathbb{D}_{n-2}^2, \left(d\theta+\sum_{i=1}^{n-2} \sigma_i, \omega_{S^2} +\sum_{i=1}^{n-2} d\sigma_i\right)\right)
\end{equation}
Here, $\mathbb{D}_i^2$ is a copy of the $2$-disk $\mathbb{D}^2$, $\omega_{S^2}$ is an area form on $S^2$, and $\sigma_i$ is a Liouville form in the $i$-th disk $\mathbb{D}_i^2$. In particular, its kernel is
$$
\xi_Y:=TS^2\oplus \xi_0,
$$
where $\xi_0=\ker\left(d\theta+\sum_{i=1}^{n-2} \sigma_i\right)$ is a contactization contact structure associated to the Liouville domain $\left(\mathbb{D}:=\prod_{i=1}^{n-2}\mathbb{D}_i^2,\sigma:=\sum_{i=1}^{n-2} \sigma_i\right)$, and so it is a ``confoliation''. Observe also that its Reeb vector field is $\partial_\theta$.

\vspace{0.5cm}

Let us then choose almost complex structures $j_{S^2}$ and $J_{\xi_0}$ which are respectively compatible with $\omega_{S^2}$ and $d\sigma\vert_{\xi_0}$, and define an almost complex structure $J$ 
on $\xi_Y$ by 
$$
J=j_{S^2} \oplus J_{\xi_0}
$$
We extend $J$ to $(-\epsilon,0]\times Y$ so that it maps $\partial_s$ to $\partial_\theta$. Then, $J$ is adapted to the symplectization of the stable Hamiltonian structure $\mathcal{H}^{2n-1}\vert_Y$ along $Y$, which is just the collar $(-\epsilon,0]\times Y$ inside $C^{2n}$. Observe that, whereas $Y$ is weakly pseudoconvex for this choice of $J$, the region $$\{0\}\times S^1\times \{pt\}\times  \mathbb{D}_1^2\times \dots \times \mathbb{D}_{n-2}^2\subset (-\epsilon,0]\times Y$$ becomes a strictly pseudoconvex portion of the boundary. \smallskip

By our specific choice of extension $\eta$, the region $Y\subset M_2^{2n-1}$ remains stable under the $\eta$-perturbation. The analogous local model for the $\eta$-perturbed stable Hamiltonian structure along $Y$, for inductively constructed $\eta$, is
$$
\left(Y=S^1\times S^2 \times \mathbb{D}_1^2\times \dots \times \mathbb{D}_{n-2}^2,D \left(d\theta+\sum_{i=1}^{n-2} \sigma_i, \omega_{S^2} +\sum_{i=1}^{n-2} d\sigma_i\right)+\left(0, \eta\right)\right),
$$
where $\eta=K\omega_{S^2}$ for some $K\in \mathbb{R}$, and $D$ needs to be chosen sufficiently large so that $D+K>0$. In particular, the same $J$ as for the unperturbed stable Hamiltonian structure $\mathcal{H}^{2n-1}$ is adapted to the symplectization of $\mathcal{H}_\eta^{2n-1}$ along $Y$.

\subsection{Extension to \texorpdfstring{$\mathbf{C^{2n}}$}{}} Choose any extension of $J$ to an $\omega_{C^{2n}}$-compatible almost complex structure on $C^{2n}$, so that $J$ is adjusted to the stable Hamiltonian structure $\mathcal{H}^{2n-1}$ near $M_2^{2n-1}$, and makes the latter a weakly pseudoconvex boundary component, which is strictly pseudoconvex along the regions where it is weakly dominated. In the non-perturbed cobordism $(C^{2n},\omega_{C^{2n}})$, we take $J$ to be cylindrical near the concave contact-type boundary component $-M_1^{2n-1}\subset \partial C^{2n}$, and also cylindrical near the convex contact-type portion $$M_{2,P}^{2n-1}\subset M_2^{2n-1} \subset \partial C^{2n}$$ so that in particular the latter region is actually strictly pseudoconvex. In the $\eta$-perturbed one $(C^{2n},\omega_{C^{2n}}^\eta)$, we take $J=J_\eta$ to be a perturbation of the $J$ for the unperturbed data, which is still adjusted to the perturbed stable Hamiltonian structure in the regions which remain stable under the perturbation, and so that the weak boundary piece $M_{2,P}^{2n-1}$ is still strictly pseudoconvex.\smallskip 

As in previous sections, a more detailed inductive construction is also possible, but it will not be relevant for our purposes.

\subsection{A local moduli space} Recall the definition of the local model $Y$ (Eq.\ (\ref{localmodelY})). For this choice of almost complex structure $J$, the spheres 
$$u_{(s,\theta,z)}=\{s\}\times\{\theta\}\times S^2 \times \{z\} \subset (-\epsilon,0]\times S^1\times S^2 \times \mathbb{D}=(-\epsilon,0]\times Y$$ are clearly $J$-holomorphic in $C^{2n}$. Since their normal bundles are trivial, by the Riemann-Roch formula, their index is $2n-2$. Moreover, they are Fredholm regular. Indeed, since $J$ splits, so does the associated normal linearized Cauchy-Riemann operator, which is the direct sum of standard Cauchy-Riemann operators acting on sections of a trivial line
bundle on the two-sphere. These have index 2 by the Riemann-Roch formula (e.g.\ \cite[Appendix A.4]{HZ}). Their kernels have
real dimension two since there are only constant holomorphic sections. Therefore, they are surjective. \smallskip

We will denote by $\mathcal{M}$ the moduli space of $J$-holomorphic spheres in $C^{2n}$ containing the spheres $u_{(s,\theta,z)}$. Observe that we have shown that $\mathcal{M}$ is a manifold of dimension $(2n-2)$ around the latter. 

\subsection{Local uniqueness} In this section, we prove a local uniqueness lemma, which is extracted from the proof of Theorem 7.1 in \cite{MNW}. It is crucial to obtain the results in this paper, and so deserves a separate statement and proof.

\begin{lemma}[Local Uniqueness Lemma]\label{locuniq} By shrinking $\epsilon$ if necessary, any holomorphic map $u:(S^2,j_{S^2})\rightarrow (C^{2n},J)$ in the moduli space $\mathcal{M}$, which intersects $(-\epsilon,0]\times Y$, is (a reparametrization of) one of the spheres $u_{(s,\theta,z)}$. 
\end{lemma}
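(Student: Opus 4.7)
The plan is to exploit the product structure of the local model $Y$ together with the splitting of $J$ (namely $J|_{\xi_Y}=j_{S^2}\oplus J_{\xi_0}$ and $J\partial_s=\partial_\theta$) in order to reduce the problem to holomorphic maps into hyperbolic targets, and then invoke the maximum principle.

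First I would verify that the three projections
\[
\pi_{\mathbb{D}}:(-\epsilon,0]\times Y\to \mathbb{D},\qquad \pi_{S^2}:(-\epsilon,0]\times Y\to S^2,\qquad f=e^{s+i\theta}:(-\epsilon,0]\times Y\to \mathbb{C}^{*}
\]
are pseudoholomorphic with respect to the natural complex structures on the targets. The condition $J\partial_s=\partial_\theta$ makes $f$ pseudoholomorphic into $(\mathbb{C}^{*},j_{0})$; the block structure $J|_{\xi_Y}=j_{S^2}\oplus J_{\xi_0}$ gives $\pi_{S^2}$; and for $\pi_{\mathbb{D}}$ one uses that the projection $\xi_{0}\to T\mathbb{D}$ is an isomorphism intertwining $J_{\xi_0}$ with an induced $j_{\mathbb{D}}$. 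Setting $V:=u^{-1}((-\epsilon,0]\times Y)\subset S^2$, the three compositions $\pi_{\mathbb{D}}\circ u$, $\pi_{S^2}\circ u$, and $f\circ u$ are then all holomorphic on $V$.

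The crucial step, and the main obstacle, is to argue that $V=S^2$, i.e., that $u$ cannot partially escape the collar. This is where the freedom to shrink $\epsilon$ enters. The approach I would take is a connectedness-and-regularity argument: by the preceding Riemann--Roch discussion, $\mathcal{M}$ is a smooth $(2n-2)$-dimensional manifold around each model curve, and the explicit $(2n-2)$-parameter family $\{u_{(s,\theta,z)}\}$ exhausts it there. Along any path in $\mathcal{M}$ joining $u$ to some $u_{(s_0,\theta_0,z_0)}$, the property ``image contained in the closed set $[-\epsilon+\delta,0]\times Y$'' is closed (by $C^{0}$-compactness of convergent sequences of $J$-holomorphic spheres in a fixed class), and locally open by the explicit parametrization of $\mathcal{M}$ near the model family; for $\delta>0$ small enough this open-and-closed property traps $u$ strictly inside $(-\epsilon,0]\times Y$. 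A careful choice of $\epsilon$ then yields $V=S^2$.

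Once $V=S^2$, the conclusion is standard. The map $\pi_{\mathbb{D}}\circ u:S^2\to\mathbb{D}$ is a bounded holomorphic map from the sphere, hence constant equal to some $z_0\in\mathbb{D}$, and similarly $f\circ u:S^2\to\{e^{-\epsilon}<|w|\leq 1\}\subset\mathbb{C}^{*}$ is bounded holomorphic, hence constant equal to some $e^{s_0+i\theta_0}$. Thus $u$ maps $S^2$ into the single leaf $\{s_0\}\times\{\theta_0\}\times S^2\times\{z_0\}=u_{(s_0,\theta_0,z_0)}(S^2)$, and the remaining factor $\pi_{S^2}\circ u:S^2\to S^2$ is a holomorphic self-map of some degree $d$. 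The homology class constraint inherited from $\mathcal{M}$ (shared with the degree-one model curves $u_{(s,\theta,z)}$) forces $d=1$, so $u$ is a reparametrization of $u_{(s_0,\theta_0,z_0)}$. I expect the confinement step to be by far the delicate part, while the final pseudoholomorphic-projection argument is essentially formal.
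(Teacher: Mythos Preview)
Your holomorphic-projection setup and your final step (once $V=S^2$) are correct and essentially match the paper's endgame. The gap is the confinement step. Your open-closed argument presupposes both that $u$ lies in the same path-component of $\mathcal{M}$ as the model curves, and that $\mathcal{M}$ is a manifold along such a path; neither is established, since regularity is only known \emph{at} the model curves, and connectedness to the model family is essentially the conclusion you are after. Indeed, if the argument worked it would show that \emph{every} curve in the component is a model curve, which is stronger than the lemma asserts and becomes manifestly false once $\mathcal{M}$ is extended to a larger symplectic manifold as in the applications.

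The paper's fix is to reverse the order of your last two steps: prove constancy of the non-$S^2$ factor first, and confinement then comes for free. First treat the special case where $u$ meets the actual boundary slice $\{0\}\times Y$. On the component $\mathcal{U}_0\subset V$ containing the touching point, the map $u_2:=(s,\theta,z)\circ u:\mathcal{U}_0\to(-\epsilon,0]\times S^1\times\mathbb{D}$ is holomorphic and touches the strictly pseudoconvex hypersurface $\{s=0\}$ tangentially (equivalently, in your language, $|f\circ u|$ attains its maximum $1$ in the interior), so $u_2$ is constant on $\mathcal{U}_0$. But then $u$ cannot exit $(-\epsilon,0]\times Y$ across $\partial\mathcal{U}_0$, since exiting requires either $s\to-\epsilon$ or $z\to\partial\mathbb{D}$; hence $\mathcal{U}_0=S^2$, and your step~3 finishes. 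For $u$ meeting only the interior $(-\epsilon,0]\times Y$, one argues by contradiction via Gromov compactness: a sequence $u_k$ meeting $\{-\epsilon_k\}\times Y$ with $\epsilon_k\to 0$ and not of model type has a nodal limit meeting $\{0\}\times Y$; the special case forces every component of the limit to coincide with one and the same model sphere, and Fredholm regularity of that sphere then yields the contradiction. The phrase ``shrinking $\epsilon$'' refers to this compactness step, not to an open-closed argument.
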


\begin{proof}
First, we assume that $u$ intersects $\{0\}\times Y$. In this case, we define the (a priori disconnected) open set $$\mathcal{U}:=u^{-1}((-\epsilon,0]\times Y).$$ Then $u\vert_{\mathcal{U}}=(u_1,u_2)$, where $u_1:\mathcal{U}\rightarrow S^2$, and $u_2:\mathcal{U}\rightarrow (-\epsilon,0]\times S^1\times \mathbb{D}$ are holomorphic maps. Since $u_2$ touches the strictly pseudoconvex boundary of $(-\epsilon,0]\times S^1\times \mathbb{D}$ tangentially, it follows that it is constant wherever it is defined. Observing that the only way of escaping the region $(-\epsilon,0]\times Y$ is through the interval direction, it follows that $\mathcal{U}=S^2$. Since $u$ is by assumption an element of $\mathcal{M}$, $u_1:S^2\rightarrow S^2$ is simply covered, and so degree $1$. It follows that $u$ is a reparametrization of a sphere $u_{(0,\theta,z)}$. \smallskip 

In the general case, we will assume to the contrary that we have a sequence $u_k$ of holomorphic spheres intersecting $\{-\epsilon_k\}\times Y$, for $\epsilon_k\rightarrow 0$, and that are not reparametrizations of any of the $u_{(s,\theta,z)}$. Then, a subsequence converges to a nodal configuration $u_\infty$ in $\overline{\mathcal{M}}$ intersecting $\{0\}\times Y$. Then, at least one of its sphere components is a reparametrization of a sphere $u_{(0,\theta,z)}$. All the adjacent sphere components to this particular one intersect $u_{(0,\theta,z)}$, and therefore they intersect $\{0\}\times Y,$ which completely contains the image of $u_{(0,\theta,z)}$. So, it inductively follows that \emph{all} sphere components of $u_\infty$ are reparametrizations of the same $u_{(0,\theta,z)}$ (since different such spheres never intersect). Since all $u_k$ are elements in $\mathcal{M}$, they all have the same homology class, and therefore so does $u_\infty$. It follows that $u_\infty$ is a reparametrization of $u_{(0,\theta,z)}$. But Fredholm regularity implies that every sphere in $\mathcal{M}$ near $u_{(0,\theta,z)}$ is of the form $u_{(s^\prime,\theta^\prime,z^\prime)}$ for some $(s^\prime,\theta^\prime,z^\prime)$, which is a contradiction. This proves the lemma. 
\end{proof}

\begin{remark}
In the proofs below, we shall apply the above Local Uniqueness Lemma to moduli spaces which are obtained via abstract perturbations (in the polyfold sense of Hofer--Wysocki--Zehnder, as discussed below) of moduli spaces which extend $\mathcal{M}$. The precise moduli space depends on the context, and will be specified when used. In our setup, we will only need to introduce abstract perturbations which do not affect the elements of $\mathcal{M}$, and therefore the above lemma continues to hold for the perturbed moduli spaces, with no change in the statements. 
\end{remark}

\section{Weak semi-fillability}\label{semifillproof}

We now proceed to the proof of Theorem \ref{mainthm}.

\begin{proof}
Assume that $(W^{2n},\omega)$ is a symplectic co-filling, having the \textbf{IP} contact manifold $(M^{2n-1},\xi)$ as a boundary component, where $\xi$ is supported by an \textbf{IP} open book decomposition, and other nonempty boundary components. Given any Giroux form $\alpha$ for $\xi$, in the strong case, we may assume that $\omega\vert_{(-\delta,0]\times M}=d(e^t\alpha)$ in some $\delta$-collar neighborhood of $M$. In the weak case, we may take $\omega\vert_{(-\delta,0]\times M}=Dd(e^t\alpha)+\omega\vert_{\xi}$, for some $D>0$ constant. \smallskip

Using the modification of Section \ref{modification}, if necessary, we can then attach the symplectic cobordism $(C,\omega_{C})$ to $(W,\omega)$ along $(M,\xi=\ker \alpha)$, where $\alpha$ is the explicit Giroux form arising from the construction of $(C,\omega_{C})$. We obtain a new boundary component $M^\prime$, which is stable in the strong case, and, in the weak case, has portions which are stable, and others which are weakly dominated (arising as a small perturbation of a contact-type data). Near $M^\prime$, we have a moduli space of $J$-holomorphic spheres for the almost complex structure $J$ from Section \ref{moduli} defined on $C$, stemming from a subregion of the form $(-\epsilon,0]\times Y\subset C$.\smallskip

We now extend the almost complex structure $J$ on $C$ to $W$, so that it makes all the boundary components different from $M^\prime$ strictly pseudoconvex. We then obtain a moduli space of holomorphic spheres $\mathcal{M}$ in $W$, containing the spheres in $C$. Its virtual dimension is $2n-2$, and it is a manifold near the spheres in $(-\epsilon,0]\times Y$. We add a marked point to the domain of each curve in $\mathcal{M}$, obtaining a moduli space $\mathcal{M}_{\bullet}$ of virtual dimension $2n$, and an evaluation map $ev: \mathcal{M}_{\bullet} \rightarrow W$. \smallskip

Observe that, while the spheres in $\mathcal{M}_\bullet$ are somewhere injective if they lie in $(-\epsilon,0] \times Y$, there could be multiple covers somewhere else, or in components of nodal configurations in the compactification $\overline{\mathcal{M}}_\bullet$. We then appeal to the polyfold technology of \cite{HWZ}, together the regularization of constrained moduli spaces of \cite{Ben}, as follows. We will give a fleshed out version of the same argument used in the proof of Theorem 6.1 in \cite{MNW}. 

We view the Gromov compactification $ \overline{\mathcal{M}}_{\bullet}$ of $\mathcal{M}_{\bullet}$ as sitting inside a \emph{Gromov-Witten polyfold} $\mathcal{B}$ \cite[Section\ 2.2, Definition\ 2.29, Section\ 3.5]{HWZ} consisting of (not necessarily holomorphic) stable nodal configurations of spheres with one marked point and possibly multiple components. It comes with a natural evaluation map $ev:\mathcal{B}\rightarrow W$, which extends the one of $\overline{\mathcal{M}}_{\bullet}$. We view the nonlinear Cauchy-Riemann operator $\overline{\partial}_J$ as a \emph{Fredholm section} \cite[Definition\ 4.1]{HWZIII} of a \emph{strong polyfold bundle} $\mathcal{E}\rightarrow \mathcal{B}$ \cite[Definition\ 2.37,2.38, Section 3.6]{HWZ} with zero set $\overline{\partial}_J^{-1}(0)=\overline{\mathcal{M}}_{\bullet}$. We pick a properly embedded smooth path $l:[0,1]\rightarrow W$ with $l(0)=(0,\theta,p,z)\in \{0\}\times Y$, for some $p \in S^2$, so that it intersects each sphere $u_{(s,\theta,z)}$, with $s\in (-\epsilon,0]$, precisely once and transversely, and with $l(1)$ lying in a boundary component $M^{\prime \prime}$ of $W$ different from $M^\prime$, meeting both boundary components transversely inside $W$. Consider the constrained moduli space $\overline{\mathcal{M}}_{\bullet,l}:=ev^{-1}(l)$, as well as $\mathcal{B}^\prime:=ev^{-1}(l)\subset \mathcal{B}$. By \cite[Corollary\ 6.7, Corollary\ 6.8]{Ben}, which are the ep-groupoid versions respectively of \cite[Theorem\ 1.3, Theorem\ 1.5]{Ben} taking isotropy into consideration, together with \cite[Remark\ 1.7]{Ben} to acommodate for the fact that $W$ and $l$ have non-empty boundary, we obtain that $\mathcal{B}^\prime$ is a polyfold with $\partial \mathcal{B}^\prime=ev^{-1}(l(0))\sqcup ev^{-1}(l(1))$, and $\overline{\partial}_J\vert_{\mathcal{B}^\prime}$ is a Fredholm section with index $1$. See also \cite[Section\ 5.1]{Ben} where all the technical assumptions of the theorems just used are explicitly checked.

Strict pseudoconvexity at $M^{\prime\prime}$ implies that $\overline{\partial}_J^{-1}(0)\cap ev^{-1}(l(1))=\emptyset$. Moreover, by our choice of $l$, and the fact that the moduli space $\{u_{(s,\theta,z)}\}$ consisting of the spheres in $(-\epsilon,0]\times Y$ is transversely cut out, the section $\overline{\partial}_J$ is in \emph{good position} \cite[Definition\ 4.12]{HWZIII} (without need to perturb) at the boundary. According to \cite[Theorem\ 4.22]{HWZIII}, we may introduce an abstract perturbation $\mathbf{p}$, which is a multivalued section of $\mathcal{E}$ \cite[Definition\ 3.35, Definition\ 3.43]{HWZIII}, so that:
\begin{itemize}
    \item $\overline{\partial}_J+\mathbf{p}$ is transverse to the zero section, and
    \item The perturbed and constrained moduli space $$\overline{\mathcal{M}}^\prime_{\bullet,l}=(\overline{\partial}_J+\mathbf{p})^{-1}(0)\cap ev^{-1}(l)$$ is a $1$-dimensional compact, oriented, weighted branched orbifold with boundary and corners \cite[Section\ 3.2, Definition\ 3.22]{HWZIII}.
\end{itemize}

Observe that our choice of perturbation implies that the elements of $\overline{\mathcal{M}}^\prime_{\bullet,l}$ approaching $(-\epsilon,0]\times Y$ are still actually $J$-holomorphic curves, and so the Local Uniqueness Lemma \ref{locuniq} continues to hold. Moreover, SFT compactness implies that $(\overline{\partial}_J+\mathbf{p})^{-1}(0)\cap ev^{-1}(l(1))=\emptyset$ for sufficiently small perturbation $\mathbf{p}$. Hence, we obtain $$\partial \overline{\mathcal{M}}^\prime_{\bullet,l}=(\overline{\partial}_J+\mathbf{p})^{-1}(0)\cap ev^{-1}(l(0))=\{u_{(0,\theta,z)}\}$$ by the Local Uniqueness Lemma \ref{locuniq}. But there exist no $1$-dimensional non-empty compact, oriented, weighted branched orbifold with connected boundary, and we get a contradiction. \end{proof}

\section{Nonseparating weak contact-type hypersurfaces}

We now prove Theorem \ref{nonsep}.

\begin{proof}
Assume to the contrary that the \textbf{IP} contact manifold $(M^{2n-1},\xi)$ embeds into a closed symplectic manifold $(W^{2n},\omega)$ as a nonseparating weak contact-type hypersurface. We now adapt the proofs of \cite[Theorem\ 1.15, Theorem\ 2.7]{ABW}, to which we refer the reader for more details. \smallskip

We cut $W$ open along $M$, obtaining a symplectic cobordism $(W_1,\omega_1)$ having two weakly dominated boundary components $M_\pm$, one positive and one negative, and both diffeomorphic to $M$. We ``get rid'' of the negative boundary component $M_-$, by attaching infinitely many copies of $W_1$ at $M_-$, at each step identifying $M_-$ with $M_+$. That is, we inductively define $$(W_n,\omega_n)=(W_{n-1}, \omega_{n-1})\bigcup_{M_-\sim M_+} (W_1,\omega_1).$$ 

After the induction, the result is a noncompact symplectic manifold $(W_\infty, \omega_\infty)$ with boundary $M_+\cong M$, which is positively weakly dominated. By construction, $W_\infty$ contains infinitely many copies of $M$, as weak contact-type hypersurfaces. We attach the perturbed version of the symplectic cobordism $(C^{2n},\omega_{C^{2n}})$ (of Section \ref{modification}) to $(W_\infty,\omega_\infty)$ along its positive boundary $M_+$, obtaining a new symplectic manifold $(W^\prime_\infty, \omega^\prime_\infty)$ with stable boundary $M^\prime$. \smallskip

Observe that, by construction, the symplectic form $\omega_\infty$ is periodic. Therefore, we may choose any $\omega_\infty^\prime$-compatible almost complex structure $J$ in $W_\infty^\prime$, which extends the almost complex structure along $C^{2n}$ of Section \ref{moduli}, which is also periodic along $W_\infty\subset W_\infty^\prime$, and which makes every copy of $M$ inside  $W_\infty$ strictly pseudoconvex. Periodicity then implies that $W^\prime_\infty$ has bounded geometry, which means in particular that closed holomorphic curves with bounded energy have bounded diameter, where the bound on the diameter depends only on the energy bound (see e.g.\ \cite[Lemma 2.5]{Zhang} for a proof of this well-known fact).\smallskip

From Section \ref{moduli}, we obtain a moduli space $\mathcal{M}_\bullet$ of spheres with a marked point in $W^\prime_\infty$, of virtual dimension $2n$, stemming from a region $(-\epsilon,0]\times Y\subset C^{2n}$. We may now use similar arguments as in the proof of Theorem \ref{mainthm}. Namely, we choose a properly embedded path $l:[0,+\infty)\rightarrow W^\prime_\infty$, with $l(0)\in (-\epsilon,0]\times Y$, such that $l$ is transverse to $M^\prime$ and to each sphere $u_{(s,\theta,z)}$. As in the proof of Theorem \ref{mainthm}, by abstractly perturbing the Cauchy-Riemann equation, we obtain a $1$-dimensional noncompact, oriented, weighted branched orbifold $\mathcal{M}^\prime_{\bullet,l}$, consisting of spheres constrained to intersect $l$. Observe that no sequence of spheres in $\mathcal{M}^\prime_{\bullet,l}$ is allowed to traverse the noncompact piece of $W_\infty^\prime$ in such a way that the distance to $M^\prime$ goes to infinity, since otherwise there would exist a sphere in that sequence which tangentially intersects (from below) one of the infinitely many \emph{strictly pseudoconvex} hypersurfaces. This means that there is a sequence of spheres in $\mathcal{M}^\prime_{\bullet,l}$ which intersects $l$ in a sequence of points converging to infinity along the noncompact piece of $W_\infty^\prime$, but always staying at a bounded distance from $M^\prime$. However, this cannot happen because of the diameter bounds on holomorphic spheres imposed by the fact that $W_{\infty}^\prime$ has bounded geometry.\end{proof}

\section{The Weinstein conjecture}

We now prove Theorem \ref{WeinsteinIP}.
\begin{proof}
Let $(M^{2n-1},\xi)$ be an \textbf{IP} contact manifold and $\alpha_0$ be any contact form inducing $\xi$. Let $\alpha_1$ be the Giroux form adapted to the \textbf{IP} open book decomposition with $\ker \alpha_1=\xi$, which we explicitly constructed in Section \ref{Handleattachment}. Then, as in e.g.\ \cite{AH}, after scaling $\alpha_0$ by a constant as necessary, we may construct an exact symplectic cobordism $((-\infty,0]\times M, d\lambda)$, where $\lambda=e^t\alpha_0$ for $t\leq -1$, and $\lambda=e^t\alpha_1$ for $t\in [-\epsilon,0]$ for any given small $\epsilon>0$. \smallskip

We attach the (non-perturbed) symplectic cobordism $(C^{2n},\omega_{C^{2n}})$ constructed in Section \ref{Handleattachment} to $((-\infty,0]\times M, d\lambda)$ along $\{0\}\times M$, obtaining a (non-exact and noncompact) symplectic cobordism $$(W,\omega)=((-\infty,0]\times M, d\lambda) \bigcup_M (C^{2n},\omega_{C^{2n}})$$ with stable boundary $M^\prime$. We extend the almost complex structure $J$ constructed in Section \ref{moduli} to $W$, so that it is cylindrical in the cylindrical ends of $W$. We thus obtain a moduli space $\mathcal{M}$ of spheres in $W$ stemming from $M^\prime$, for which the Local Uniqueness Lemma holds in a subregion of the form $(-\epsilon,0]\times Y \subset (-\epsilon,0]\times M^\prime \subset W.$\smallskip

Choose a properly embedded path $l:[0,+\infty)\rightarrow W$ with $l(0) \in \{0\}\times Y \subset \partial W$, which is transverse to $M^\prime$ and to each sphere $u_{(s,\theta,z)}$, and properly embedded in $W$. Add a marked point to the domain of the spheres, obtaining a moduli space $\mathcal{M}_\bullet$ of virtual dimension $2n$. As in the proof of Theorem \ref{nonsep}, by introducing an abstract perturbation which vanishes near the spheres in $(-\epsilon,0]\times Y$, we obtain a $1$-dimensional non-compact, oriented, weighted branched orbifold $\mathcal{M}^\prime_{\bullet,l}$ by constraining the spheres to lie in $l$, with boundary $ev^{-1}(l(0))$ (by the Local Uniqueness Lemma). By the exactness of $\omega$ in $(-\infty,0]\times M\subset W$, the noncompactness of $\mathcal{M}^\prime_{\bullet,l}$ corresponds only to spheres escaping down the negative end. Observe that, by the exactness of $\omega$ along $(-\infty,0]\times M$, no sphere in $\mathcal{M}^\prime_{\bullet,l}$ can completely lie in $(-\infty,0]\times M \subset W$. It follows that, as we trace $\mathcal{M}^\prime_{\bullet,l}$ along its (possibly multiple) noncompact ends, we obtain a sequence of spheres which intersect $l$ in a sequence of points which diverge along the negative end of $W$. By SFT-compactness \cite{SFT}, this sequence breaks at the negative end, and we obtain a finite energy holomorphic curve in the negative symplectization of $(M,\alpha_0)$. This proves Theorem \ref{WeinsteinIP}.   
\end{proof}

\end{document}